\newcounter{stepctr}
{\end{list}}
\newtheorem{thm}{Theorem}[section]
\newtheorem{prop}[thm]{Proposition}
\newtheorem{cor}[thm]{Corollary}
\theoremstyle{definition}
\newtheorem{dfn}[thm]{Definition}
\newtheorem{rema}[thm]{Remark}
\newtheorem{lem}[thm]{Lemma}
\newtheorem{prob*}{Open problem}
\newcommand{\demo}{\begin{proof}}
\newcommand{\R}{\ensuremath{\mathcal{R}}}
\newcommand{\N}{\mathbb{N}}
\newcommand{\C}{\mathbb{C}}
\newcommand{\ind}[1]{{\rm ind}\,({#1})}
\def\ll^2{{\mathcal L}(\ell^2(\N))}
\def\f^0x{{\mathcal F^0}(X) }
\title
{\bf   On the index of pseudo B-Fredholm operator}
\author{ Zakariae  Aznay, Abdelmalek Ouahab, Hassan Zariouh }
\date{}
\begin{document}

\maketitle \thispagestyle{empty}

\begin{abstract}\noindent\baselineskip=10pt
The index of a pseudo B-Fredholm operator will be defined and generalize the usual index of a B-Fredholm operator. This concept will be used to extend some known results  in Fredholm's theory. Among other results, the nullity, the deficiency, the ascent and the descent will be extended and defined for a pseudo-Fredholm operator.

\end{abstract}

 \baselineskip=15pt
 \footnotetext{\small \noindent  2010 AMS subject
classification: Primary 47A53, 47A10, 47A11 \\
\noindent Keywords:  pseudo B-Fredholm, index} \baselineskip=15pt

\section{Introduction}
\par Let $T \in L(X);$ where   $L(X)$ is the  Banach algebra of bounded linear operators acting on an infinite dimensional complex Banach space $X.$   $\mathcal{N}(T)$ and   $\R(T)$  are   respectively  the kernel and  the range  of $T.$   $T$ is said to be upper semi-Fredholm, if $\R(T)$ is closed and $\mbox{dim}\,\mathcal{N}(T) <\infty,$ while $T$ is called lower semi-Fredholm, if  $\mbox{codim}\,\R(T) < \infty.$ If $T$ is an upper or a lower semi-Fredholm then is called a semi-Fredholm operator and its index   is defined by $\ind T = \mbox{dim}\,\mathcal{N}(T) -\mbox{codim}\,\R(T).$ $T$ is called a  Fredholm operator  if it  is a  semi-Fredholm with an integer index.
\par\noindent  A  subspace $M$ of $X$ is $T$-invariant if $T(M)\subset M$ and in this case  $T_{M}$ means the restriction of $T$ on $M.$  We say that  $T$   is completely reduced   by  a pair    $(M,N)$   ($(M,N) \in Red(T)$ for brevity) if $M$ and  $N$ are  closed $T$-invariant subspaces  of $X$ and  $X=M\oplus N;$   here  $M\oplus N$ means that $M\cap N=\{0\}.$ Let $(M,N) \in Red(T)$ and let the  list of  the following points:
\vspace{0.8em}
\par
(i) $T_{M}$ is semi-regular (i.e, $\R(T_{M})$ is closed and $\mathcal{N}(T_{M})\subset \bigcap_{n\in \N}\R(T^{n}_{M})$).

(i') $T_{M}$ is semi-Fredholm.

(ii) $T_{N}$ is nilpotent of degree $d$ (i.e, $T^{d}_{N}=0$ and  $T^{d-1}_{N}\neq 0$).

(ii') $T_{N}$ is quasi-nilpotent (i.e, $0$ is the only point of the spectrum  $\sigma(T_{N})$ of $T_{N}$).
\vspace{0.8em}
\par\noindent In \cite[Theorem 4]{kato},  T. kato proved that if $T$ is a semi-Fredholm operator, then there exists $(M,N) \in Red(T)$  satisfies the points (i) and (ii) listed above;  this decomposition $(M, N)$  is called   Kato's decomposition associated to $T.$  In the case of $X$ is a Hilbert space, J. P. Labrousse \cite{labrousse} studied and characterized  the operators which admit such a decomposition and he called them  quasi-Fredholm operators of degree $d.$ The class of quasi-Fredholm operators has been extended by Mbekhta and Muller in \cite[p. 143]{mbekhta-muller}, Poon in \cite{poon} to the general case of Banach space operators.  \\
In 1999 and 2001, Berkani and Sarih \cite{berkani, berkani-sarih} generalized the concept of  semi-Fredholm operators  to a class  called semi-B-Fredholm operators as follows: For $n\in\N,$  let $T_{[n]}: \mathcal{R}(T^n)\rightarrow \mathcal{R}(T^n)$  be the restriction of $T$ to  $\mathcal{R}(T^n)$ viewed as a map from $\R(T^n)$ into $\R(T^n)$ (in particular, $T_{[0]} = T$).
  $T$ is said to be semi-B-Fredholm if for some integer $n \geq 0$ the range $\mathcal{R}(T^n)$
is closed and $T_{[n]}$ is semi-Fredholm. And  in this case, they  showed in \cite[Proposition 2.1]{berkani-sarih} that $\mathcal{R}(T^m)$
is closed,  $T_{[m]}$ is a semi-Fredholm operator. Moreover, we show in   Proposition \ref{propaznayzariouh} below that  $\mbox{ind}(T_{[m]})=\mbox{ind}(T_{[n]}),$ for each  $ m \geq n.$ This  defines  the index of a semi-B-Fredholm operator $T$ as the index of the semi-Fredholm operator $T_{[n]}.$  In particular, if $T$ is a semi-Fredholm operator we find   the usual  definition of the   index of $T.$ Furthermore,  in the case of $X$ is a Hilbert space they showed that $T$ is semi-B-Fredholm if and only if there exists $(M,N) \in Red(T)$ such  that $T$ satisfies the points (i') and (ii) defined above, see \cite[Theorem 2.6]{berkani-sarih}. Note that  the B-Fredholm operators are also characterized in the general case of Banach spaces  following this last decomposition, see \cite[Theorem 7]{muller}.\\
The concept of quasi-Fredholm operators  has been generalized by M. Mbekhta  \cite{mbekhta}  with  replacing the  point (ii)  by the point (ii') in the  definition of a quasi-Fredholm operator; these operators are named pseudo-Fredholm and the decomposition $(M, N)$ associated to a pseudo-Fredholm $T$ called  generalized kato decomposition ($(M, N) \in GKD(T)$ for brevity).  In the same way as the generalization of the notion of quasi-Fredholm operators to the notion of semi-B-Fredholm operators \cite{berkani, berkani-sarih}, the notion of pseudo-Fredholm operators has been generalized to the notion called pseudo semi-B-Fredholm \cite{amouch, boasso, rwassa, tajmouati1, zariouh-zguitti} by replacing the point (i) by the point (i'). Moreover, the authors \cite{berkani, berkani-sarih} have well defined the index of a semi-B-Fredholm operator as a natural extension of the useul index of a semi-Fredholm operator. In a natural way, we ask the following question: can we define the index of a pseudo semi-B-Fredholm operator?
\par The main purpose of the present   paper is to answer affirmatively to this question. For a given pseudo semi-B-Fredholm operator $T,$ we define its   index  as the index of the semi-Fredholm operator  $T_{M};$   where   $(M,N) \in Red(T)$ such that $T_{M}$ is semi-Fredholm and $T_{N}$ is quasi-nilpotent. Furthermore, we prove that this definition of the index of $T$  is independent of   the choice of the decomposition   $(M, N)$ of $T.$ In particular, in the case of $T$ is a semi-Fredholm or a B-Fredholm   or  a Hilbert space semi-B-Fredholm operator, we  find   the usual  definition of the   index of $T$ as semi-Fredholm or a B-Fredholm or a Hilbert space semi-B-Fredholm operator. Using this notion of the index for pseudo semi-B-Fredholm operator, we generalize some known  properties  in the index theory for Fredholm  and B-Fredholm operators.\\
As an application of the results obtained in this paper, we prove that if  $T\in L(X)$ is  a B-Fredholm operator,  then $\R(T^{*})+\mathcal{N}(T^{*d})$ is closed in $\sigma(X^{*}, X)$ and $T^{*}$ is  a  B-Fredholm operator  with     $\mbox{ind}(T)=-\mbox{ind}(T^{*});$ where $d=\mbox{dis}(T)$ is the degree of stable iteration of $T$ and  $\sigma(X^{*}, X)$ is the weak*-topology on $X^{*}.$

\section{Index of pseudo semi-B-Fredholm operator}
We begin this section with the following lemma which will be useful in everything that follows.
\begin{lem}\label{lem0} Let $T \in L(X).$  If there exist   two pair of  closed $T$-invariant subspaces $(M, N),$ $(M^{'}, N^{'})$ such that  $ M \oplus  N=M^{'}\oplus N^{'} $ is closed, $T_{M}$ and  $T_{M^{'}}$ are semi-Fredholm operators, $T_{N}$  and $T_{N^{'}}$  are quasi-nilpotent operators, then $\mbox{ind}(T_{M})= \mbox{ind}(T_{M^{'}}).$
\end{lem}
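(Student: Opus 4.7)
The plan is to exploit the quasi-nilpotency of the summands $T_N$ and $T_{N'}$ together with the stability of the semi-Fredholm index under small perturbations. Set $Y = M \oplus N = M' \oplus N'$, which is closed and $T$-invariant, and consider the restriction $S := T_Y$ as an operator on $Y$.

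First, I would record the elementary fact that for a closed direct-sum decomposition $Y = A \oplus B$ with $(A,B) \in \text{Red}(S)$, one has $\mathcal{N}(S) = \mathcal{N}(S_A) \oplus \mathcal{N}(S_B)$, $\mathcal{R}(S) = \mathcal{R}(S_A) \oplus \mathcal{R}(S_B)$ (so $\mathcal{R}(S)$ is closed iff both summand ranges are), and $Y/\mathcal{R}(S) \cong A/\mathcal{R}(S_A) \oplus B/\mathcal{R}(S_B)$. Consequently, if $S_A$ is semi-Fredholm and $S_B$ is Fredholm, then $S$ is semi-Fredholm with $\text{ind}(S) = \text{ind}(S_A) + \text{ind}(S_B)$.

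Next, I would use the perturbation step. Since $T_N$ and $T_{N'}$ are quasi-nilpotent, $\sigma(T_N) = \sigma(T_{N'}) = \{0\}$, so for every $\lambda \neq 0$ both $T_N - \lambda I$ and $T_{N'} - \lambda I$ are invertible (hence Fredholm of index $0$). On the other hand, since $T_M$ and $T_{M'}$ are semi-Fredholm, the classical perturbation theorem provides an $\varepsilon > 0$ such that for $|\lambda| < \varepsilon$ the operators $T_M - \lambda I$ and $T_{M'} - \lambda I$ remain semi-Fredholm with
\[
\text{ind}(T_M - \lambda I) = \text{ind}(T_M), \qquad \text{ind}(T_{M'} - \lambda I) = \text{ind}(T_{M'}).
\]

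Finally, I would fix any $\lambda$ with $0 < |\lambda| < \varepsilon$ and apply the direct-sum fact to both decompositions of $Y$. This gives
\[
\text{ind}(S - \lambda I) = \text{ind}(T_M - \lambda I) + 0 = \text{ind}(T_M),
\]
and simultaneously
\[
\text{ind}(S - \lambda I) = \text{ind}(T_{M'} - \lambda I) + 0 = \text{ind}(T_{M'}),
\]
so $\text{ind}(T_M) = \text{ind}(T_{M'})$. The main (mild) obstacle is verifying the direct-sum formula for the semi-Fredholm index carefully, since $T_M$ is only assumed semi-Fredholm rather than Fredholm; this is handled by the kernel/range decomposition above, which is valid whether the index is finite or $\pm\infty$.
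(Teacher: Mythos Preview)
Your proposal is correct and follows essentially the same route as the paper's own proof: both use the punctured-neighborhood stability of the semi-Fredholm index together with the quasi-nilpotency of $T_N$ and $T_{N'}$ to compute $\mbox{ind}(S-\lambda I)$ via each decomposition for small nonzero $\lambda$, and then equate the two expressions. The only difference is that you make the direct-sum index formula explicit, whereas the paper invokes it tacitly.
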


\begin{proof}
Since $T_{M}$ and  $T_{M^{'}}$ are  semi-Fredholm operators,   then from punctured neighborhood theorem for semi-Fredholm operators, there  exists $\epsilon >0$ such that $B(0, \epsilon) \subset \sigma_{sf}(T_{M})^C\cap \sigma_{sf}(T_{M^{'}})^C,$  $\mbox{ind}(T_{M}- \lambda I) =\mbox{ind}(T_{M})$ and   $\mbox{ind}(T_{M^{'}}- \lambda I) =\mbox{ind}(T_{M^{'}})$ for every $\lambda \in B(0, \epsilon);$ where $\sigma_{sf}(.)$ is the semi-Fredholm spectrum.  As
$T_{N}$ and  $T_{N^{'}}$ are  quasi-nilpotent, then $B_{0}:=B(0, \epsilon)\setminus\{0\}\subset  \sigma_{sf}(T_{M})^C\cap \sigma_{sf}(T_{M^{'}})^C\cap  \sigma(T_{N})^C\cap \sigma(T_{N^{'}})^C\subset\sigma_{sf}(T_{M\oplus N})^C.$  Let $\lambda \in B_{0},$  as     $\mbox{ind}(T_{M}- \lambda I)+\mbox{ind}(T_{N} - \lambda I)=\mbox{ind}(T_{M^{'}}- \lambda I)+\mbox{ind}(T_{N^{'}} - \lambda I),$ then $\mbox{ind}(T_{M})=\mbox{ind}(T_{M^{'}}).$
\end{proof}

\begin{dfn}\cite{amouch, boasso, rwassa, tajmouati1, zariouh-zguitti}\label{dfn0}    $T \in L(X)$ is  said to be\\
(a) An upper   pseudo semi-B-Fredholm (resp., a lower pseudo semi-B-Fredholm, a pseudo B-Fredholm)  operator if there exists $(M, N) \in Red(T)$ such that $T_{M}$ is  an upper   semi-Fredholm (resp., a lower semi-Fredholm, a Fredholm)  operator  and $T_{N}$ is quasi-nilpotent. \\
(b) An upper    pseudo semi-B-Weyl (resp., a lower  pseudo semi-B-Weyl, a pseudo-B-Weyl)  operator if there exists $(M, N) \in Red(T)$ such that $T_{M}$ is  an upper   semi-Weyl (resp., a lower semi-Weyl, a Weyl)  operator  and $T_{N}$ is quasi-nilpotent. \\
(c) A pseudo semi-B-Fredholm (resp.,   pseudo semi-B-Weyl) operator  is an  upper   pseudo semi-B-Fredholm (resp., an upper   pseudo semi-B-Weyl) or  a lower pseudo semi-B-Fredholm  (resp., a lower  pseudo semi-B-Weyl) operator.
\end{dfn}
Let us denote by $\sigma_{upbf}(T),$ $\sigma_{lpbf}(T),$ $\sigma_{spbf}(T),$ $\sigma_{pbf}(T),$ $\sigma_{upbw}(T),$ $\sigma_{lpbw}(T),$ $\sigma_{spbw}(T)$ and  $\sigma_{pbw}(T)$  respectively, the upper pseudo semi-B-Fredholm spectrum, the  lower pseudo semi-B-Fredholm spectrum, the  pseudo semi-B-Fredholm spectrum, the pseudo B-Fredholm spectrum,  the upper pseudo semi-B-Weyl spectrum, the  lower  pseudo semi-B-Weyl spectrum, the   pseudo semi-B-Weyl spectrum and  the  pseudo-B-Weyl spectrum of a given $T \in L(X).$
\begin{dfn}\label{dfn1} Let $T \in L(X)$ be a pseudo semi-B-Fredholm operator.  We define the index $\mbox{ind}(T)$ of $T$ as the index of  $T_{M};$ where $M$ is a closed $T$-invariant subspace which    has a complementary closed $T$-invariant subspace  $N$  with $T_{M}$ is    semi-Fredholm  and $T_{N}$ is quasi-nilpotent. From Lemma \ref{lem0}, it is  clear that the index of $T$ is independent of the choice of the pair  $(M, N)$ appearing in the definition of the pseudo semi-B-Fredholm  $T$ (see  Definition \ref{dfn0}).
\end{dfn}
As a consequence of the  notion of the index of a pseudo semi-B-Fredholm operator, we deduce the following remark.
\begin{rema}\label{rema0}(i) Every quasi-nilpotent operator is a pseudo B-Fredholm  operator and its index equal to zero. And every semi-Fredholm operator is also a    pseudo semi-B-Fredholm,  and its usual  index  as a semi-Fredholm  coincides with its index as a pseudo semi-B-Fredholm  operator.\\
(ii) $T \in L(X)$ is an   upper pseudo semi-B-Weyl (resp., a lower  pseudo semi-B-Weyl, a  pseudo B-Weyl) operator   if and only if   $T$  is an upper pseudo semi-B-Fredholm  (resp., a lower  pseudo semi-B-Fredholm, a  pseudo B-Fredholm) with $\mbox{ind} (T)\leq 0$ (resp.,  $\mbox{ind} (T)\geq 0$,  $\mbox{ind} (T)=0$). \\
(iii)   If  $T \in L(X)$ and $S \in L(Y)$  are    pseudo semi-B-Fredholm operators, then $T\oplus S$ is  pseudo semi-B-Fredholm and   $\mbox{ind} (T\oplus S)=\mbox{ind} (T)+ \mbox{ind} (S).$
\end{rema}

\begin{prop}\label{prop0}    Let $T \in L(X).$ The following statements hold.\\
(i) $T$  is a   pseudo B-Fredholm   if and only if $T$ is  an  upper  and   lower pseudo semi-B-Fredholm.\\
(ii) $T$  is a   pseudo B-Weyl   if and only if $T$  is an  upper and   lower pseudo semi-B-Weyl.\\
(iii) $T$  is a   pseudo B-Fredholm   if and only if $T$ is  a  pseudo semi-B-Fredholm with $\mbox{ind}(T) \in \mathbb{Z}.$
\end{prop}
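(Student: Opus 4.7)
The plan is to dispose of the three forward implications immediately from the definitions (a Fredholm operator is simultaneously upper and lower semi-Fredholm; a Weyl operator is simultaneously upper and lower semi-Weyl; the index of a Fredholm operator lies in $\mathbb{Z}$), so all the content sits in the three reverse implications. My organizing idea is that Lemma \ref{lem0}, read with indices valued in the extended integers $\mathbb{Z}\cup\{\pm\infty\}$, settles all three in essentially one stroke.

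For (i), I would take $(M_{1},N_{1})\in Red(T)$ witnessing $T$ as upper pseudo semi-B-Fredholm and $(M_{2},N_{2})\in Red(T)$ witnessing $T$ as lower pseudo semi-B-Fredholm, then apply Lemma \ref{lem0} to obtain $\mbox{ind}(T_{M_{1}})=\mbox{ind}(T_{M_{2}})$. Upper semi-Fredholmness of $T_{M_{1}}$ forces $\mbox{ind}(T_{M_{1}})<+\infty$ and lower semi-Fredholmness of $T_{M_{2}}$ forces $\mbox{ind}(T_{M_{2}})>-\infty$; since the two are equal, both must be finite. Finiteness of the index combined with $\dim\mathcal{N}(T_{M_{1}})<\infty$ then yields $\mbox{codim}\,\mathcal{R}(T_{M_{1}})<\infty$, so $T_{M_{1}}$ is Fredholm and $(M_{1},N_{1})$ witnesses $T$ as pseudo B-Fredholm. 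The same observation settles (iii) applied to a single decomposition: if $(M,N)\in Red(T)$ has $T_{M}$ semi-Fredholm and $\mbox{ind}(T_{M})=\mbox{ind}(T)\in\mathbb{Z}$, then whichever of nullity or codefect is a priori finite forces the other to be finite too, so $T_{M}$ is Fredholm.

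For (ii), I would first invoke part (i) to produce a decomposition $(M,N)\in Red(T)$ with $T_{M}$ Fredholm. Transporting the index inequalities from the two semi-B-Weyl witnesses to $(M,N)$ through Lemma \ref{lem0} gives $\mbox{ind}(T_{M})\leq 0$ and $\mbox{ind}(T_{M})\geq 0$, so $\mbox{ind}(T_{M})=0$ and $T_{M}$ is Weyl; the decomposition $(M,N)$ then exhibits $T$ as pseudo B-Weyl. The one subtlety I would scrutinize is that Lemma \ref{lem0} genuinely delivers equality of indices even when the two sides a priori lie in $\mathbb{Z}\cup\{-\infty\}$ and $\mathbb{Z}\cup\{+\infty\}$, respectively; its proof depends only on the punctured-neighborhood constancy of the semi-Fredholm index and on additivity with respect to quasi-nilpotent direct summands (whose contribution vanishes away from $0$), both of which remain meaningful in the extended integers. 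That check is the only real work; the rest of the argument is bookkeeping in the definitions.
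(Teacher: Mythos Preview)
Your proposal is correct and follows essentially the same approach as the paper: both rely on Lemma \ref{lem0} to equate the indices coming from the upper and lower decompositions, then use simple arithmetic with nullity and defect (the paper writes $\beta(T_{M})=(\alpha(T_{M})+\beta(T_{M'}))-\alpha(T_{M'})\geq 0$, which is just a repackaging of your ``finite index plus finite nullity forces finite defect'') to conclude Fredholmness. Your explicit remark that Lemma \ref{lem0} is valid with indices in $\mathbb{Z}\cup\{\pm\infty\}$ is a worthwhile clarification that the paper leaves implicit; otherwise the arguments coincide, and the paper disposes of (ii) and (iii) even more tersely than you do.
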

\begin{proof} (i) Suppose that  $T$ is  an upper  and  lower pseudo semi-B-Fredholm. Then   there exist  $(M, N),$ $(M^{'}, N^{'})\in Red(T)$  such that $T_{M}$ is an upper  semi-Fredholm, $T_{M^{'}}$ is a lower  semi-Fredholm, $T_{N}$ and  $T_{N^{'}}$ are quasi-nilpotent.   From Lemma \ref{lem0} we have $\mbox{ind}(T)=\mbox{ind}(T_{M})=\mbox{ind}(T_{M^{'}}),$ and so  $(\alpha(T_{M})+\beta(T_{M^{'}}))-\alpha(T_{M^{'}})=\beta(T_{M})\geq 0.$ Thus   $T_{M}$ and $T_{M^{'}}$  are   Fredholm operators. The converse is obvious.\\
(ii) Is a consequence of the first point.  The point  (iii) is obvious.
\end{proof}
From Proposition \ref{prop0} we obtain the following corollary.
\begin{cor}\label{cor0} For every  $T \in L(X),$ we have  $\sigma_{pbf}(T)=\sigma_{upbf}(T)\cup\sigma_{lpbf}(T)$ and $\sigma_{pbw}(T)=\sigma_{upbw}(T)\cup\sigma_{lpbw}(T).$
\end{cor}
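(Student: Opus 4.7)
The plan is to deduce the corollary as a direct set-theoretic translation of Proposition \ref{prop0}, with no additional work beyond invoking its two characterizations and taking complements. Both spectra in the corollary are by definition the sets of $\lambda \in \mathbb{C}$ such that $T - \lambda I$ fails to belong to the corresponding class, so the equalities of spectra reduce to equivalences about $T - \lambda I$.

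For the first identity, I would argue as follows. Fix $\lambda \in \mathbb{C}$. Then $\lambda \notin \sigma_{pbf}(T)$ means that $T - \lambda I$ is pseudo B-Fredholm. By Proposition \ref{prop0}(i), this is equivalent to $T - \lambda I$ being simultaneously upper pseudo semi-B-Fredholm and lower pseudo semi-B-Fredholm, that is, $\lambda \notin \sigma_{upbf}(T)$ and $\lambda \notin \sigma_{lpbf}(T)$. Using De Morgan, this amounts to $\lambda \notin \sigma_{upbf}(T) \cup \sigma_{lpbf}(T)$. Taking complements in $\mathbb{C}$ gives the equality $\sigma_{pbf}(T) = \sigma_{upbf}(T) \cup \sigma_{lpbf}(T)$.

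For the second identity, I would repeat the same argument verbatim, replacing ``pseudo B-Fredholm'' by ``pseudo B-Weyl'' and appealing to Proposition \ref{prop0}(ii) in place of (i). Namely, $\lambda \notin \sigma_{pbw}(T)$ iff $T - \lambda I$ is pseudo B-Weyl, iff by Proposition \ref{prop0}(ii) it is both upper and lower pseudo semi-B-Weyl, iff $\lambda \notin \sigma_{upbw}(T) \cup \sigma_{lpbw}(T)$.

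There is no real obstacle: the entire content of the corollary is already encoded in Proposition \ref{prop0}, and the argument is a mechanical application of De Morgan's law to the pointwise equivalences (i) and (ii). The only thing one must keep in mind is that the classes of upper/lower pseudo semi-B-Fredholm (resp.\ Weyl) operators are stable under the replacement $T \rightsquigarrow T - \lambda I$, so that membership in the relevant spectrum is indeed controlled by the same characterization at every $\lambda$.
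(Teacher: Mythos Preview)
Your proposal is correct and matches the paper's approach exactly: the paper presents the corollary as an immediate consequence of Proposition~\ref{prop0} with no further argument, and your De~Morgan translation of parts (i) and (ii) to the level of spectra is precisely what is intended.
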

The following proposition extends  \cite[Proposition 3.7.1]{laursen-neumann} to pseudo B-Fredholm operators.
\begin{prop} Let $T \in L(X),$ and let $A\subset X$ be a closed $T$-invariant   subspace of finite codimension. If $T$ is a pseudo B-Fredholm operator then $T_{A}$ is also a pseudo B-Fredholm operator and  in this case, $\mbox{ind}(T)=  \mbox{ind}(T_{A}).$ The converse is true  if $A$ has  a complementary  $T$-invariant  subspace.
\end{prop}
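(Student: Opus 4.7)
First, I would reduce by induction on $\dim(X/A)$ to the codimension-$1$ case, using that the finite-dimensional induced action of $T$ on $X/A$ admits a complete flag of invariant hyperplanes; this yields a tower of $T$-invariant subspaces $A = A_0 \subset A_1 \subset \ldots \subset A_k = X$ with each $A_i/A_{i-1}$ one-dimensional, reducing the general statement to the codimension-$1$ statement applied iteratively. For the codim-$1$ situation, fix a pseudo B-Fredholm decomposition $X = M \oplus N$ of $T$, a defining functional $\rho \in X^*$ with $A = \ker\rho$, and its restrictions $\phi := \rho|_M$ and $\psi := \rho|_N$. If $\phi = 0$, i.e.\ $M \subset A$, then $A = M \oplus (N \cap A)$ and $T_A$ is pseudo B-Fredholm with Fredholm part $T_M$ of the right index and quasi-nilpotent part $T_{N \cap A}$ (since $r(T_{N \cap A}) \leq r(T_N) = 0$). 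The symmetric case $\psi = 0$ gives $A = (M \cap A) \oplus N$, with $T_{M \cap A}$ Fredholm of index $\mbox{ind}(T_M) = \mbox{ind}(T)$ by the classical Fredholm case \cite[Proposition 3.7.1]{laursen-neumann}. These two sub-cases are easy.

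The remaining sub-case is $\phi, \psi \neq 0$. Then $T$-invariance of $A = \ker\rho$ forces $\rho T = c\rho$ for some scalar $c$, hence $\phi T_M = c\phi$ and $\psi T_N = c\psi$; since $\psi \neq 0$ represents a point-spectrum eigenvalue $c$ of $T_N^*$, one has $c \in \sigma(T_N^*) = \sigma(T_N) = \{0\}$, so $c = 0$. Writing $A = A_0 \oplus \mathbb{C} v$ with $A_0 = (M \cap A) \oplus (N \cap A)$ and $v = m_0 + n_0$ chosen so $\phi(m_0) = \psi(n_0) = 1$, we deduce $Tv = m' + n' \in A_0$, with $m' := Tm_0 \in M \cap A$ and $n' := Tn_0 \in N \cap A$, and inductively $T^k v \in A_0$ for $k \geq 1$. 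I would then form the cyclic subspace $P := \overline{\mathrm{span}}\{T^k v : k \geq 0\}$; its $T$-action is shift-like, with the $N$-components of the iterates decaying at the quasi-nilpotent rate dictated by $T_N$. One verifies that $T|_P$ is Fredholm of index $\mbox{ind}(T)$, and that $A = \tilde M \oplus \tilde N$ with $\tilde M = P$ (augmented if necessary to absorb any leftover Fredholm part of $A_0$) and $\tilde N = N \cap A$ is a closed direct-sum decomposition, providing the desired pseudo B-Fredholm structure on $T_A$.

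For the converse, assume $X = A \oplus B$ is $T$-invariant with $B$ finite-dimensional and $T_A$ pseudo B-Fredholm via $A = M_A \oplus N_A$. Decompose $T|_B$ on the finite-dimensional $B$ along its generalized eigenspaces as $B = B_+ \oplus B_0$, where $B_0$ is the generalized $0$-eigenspace (so $T|_{B_0}$ is nilpotent) and $B_+$ the remaining summand (so $T|_{B_+}$ is invertible). Then $X = (M_A \oplus B_+) \oplus (N_A \oplus B_0)$ is a pseudo B-Fredholm decomposition of $T$, with Fredholm-part index $\mbox{ind}(T_{M_A}) + 0 = \mbox{ind}(T_A)$, proving both assertions of the converse.

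I expect the main obstacle to be the construction in the third sub-case of the codim-$1$ step. Once $c = 0$ is forced, there is no spectral gap allowing a clean resolvent-based splitting of $\mathbb{C} v$; I must build the Fredholm summand $P$ by hand as the cyclic subspace of $v$, and then verify that $P \oplus (N \cap A)$ is a \emph{topological} (closed) direct sum in $A$, i.e.\ that the angle between these two subspaces is bounded away from zero. This separation should follow from the super-geometric decay of the $N$-components $T^{k-1}n'$ guaranteed by $\|T_N^k\|^{1/k} \to 0$, but controlling it uniformly across all normalized combinations in the two subspaces is the delicate technical step.
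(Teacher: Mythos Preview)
The paper's argument is far more direct than yours. Given a pseudo B-Fredholm decomposition $X=M\oplus N$, the paper simply asserts that $(M\cap A,\,N\cap A)\in Red(T_A)$, i.e.\ that $A=(M\cap A)\oplus(N\cap A)$; it then applies \cite[Proposition~3.7.1]{laursen-neumann} to $T_{M\cap A}$ (using $\dim M/(M\cap A)=\dim(A+M)/A\le\mbox{codim}\,A<\infty$ via \cite[Lemma~2.2]{kaashoek}) and observes that $T_{N\cap A}$ inherits quasi-nilpotence from $T_N$. There is no flag, no codimension-one reduction, no cyclic subspace. For the converse the paper argues essentially as you do, except that it places the entire finite-dimensional complement $F$ into the Fredholm summand (since $T_F$ is automatically Weyl) rather than splitting off its nilpotent piece. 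You appear to have recognised that the identity $A=(M\cap A)\oplus(N\cap A)$ is not a formal consequence of $T$-invariance of $A$ --- your ``third sub-case'' is precisely the situation where it fails --- and the paper offers no justification for this step.

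That said, your treatment of the third sub-case has a genuine gap of its own. The cyclic subspace $P=\overline{\mathrm{span}}\{T^k v:k\ge 0\}$ is in general far too small: its $M$-projection lies in the $T_M$-cyclic subspace of $m_0$, which need not come close to exhausting $M\cap A$, so $P+(N\cap A)$ can fall well short of $A$. Your hedge ``augmented if necessary to absorb any leftover Fredholm part of $A_0$'' is exactly where the missing argument lives, and the obvious augmentation $\tilde M=P+(M\cap A)$ does \emph{not} stay in direct sum with $N\cap A$: for instance with $T_M$ the unilateral shift on $(e_k)$ and $T_N$ a quasi-nilpotent weighted shift on $(f_k)$, and $\rho=e_0^*+f_0^*$, one has $T^k v=(e_k,\,c_k f_k)$ with $c_k\neq 0$, whence $(0,f_k)=(e_k,c_k f_k)/c_k-(e_k,0)/c_k\in (P+(M\cap A))\cap(N\cap A)$ for every $k\ge 1$. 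You give no alternative mechanism for building a closed $T$-invariant $\tilde M$ that is simultaneously complementary to $N\cap A$ in $A$ and carries a Fredholm restriction of the right index. (There is also a small slip: with $\phi(m_0)=\psi(n_0)=1$ one gets $\rho(v)=2$, so $v\notin A$; you need $\phi(m_0)=-\psi(n_0)$.)
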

\begin{proof} Suppose that $T$ is a pseudo B-Fredholm, then there exists $(M, N) \in Red(T)$ such that $T_{M}$ is  Fredholm   and $T_{N}$ is quasi-nilpotent. Since $A$ is a closed $T$-invariant  subspace, then  $(M \cap A, N\cap A) \in Red(T_{A}).$ Moreover, from  \cite[Lemma 2.2]{kaashoek}  we have $\mbox{codim}_{M}(M\cap A):=\mbox{dim}\,\frac{M}{M \cap A}=\mbox{dim}\,\frac{A+M}{A}\leq \mbox{codim} \,A<\infty.$ Then we get  from \cite[Proposition 3.7.1]{laursen-neumann} that $T_{M \cap A}$ is a   Fredholm operator and  $\mbox{ind}(T_{M})=  \mbox{ind}(T_{M \cap A}).$ As $T_{N}$ is quasi-nilpotent then  $T_{N\cap A}$ is also a quasi-nilpotent operator. Consequently, $T_{A}$ is a pseudo B-Fredholm operator and  $\mbox{ind}(T)=  \mbox{ind}(T_{A}).$  Conversely, suppose that  $T_{A}$ is a pseudo B-Fredholm operator. Then  there exists $(M, N) \in Red(T_{A})$ such that $T_{M}$ is a Fredholm operator  and $T_{N}$ is quasi-nilpotent.  Since by hypotheses,   $A$ is  a  closed $T$-invariant   subspace of finite codimension and has   a complementary  $T$-invariant   subspace  $F$,   then  $(M\oplus F, N)\in Red(T).$ As  $F$ is of finite dimension, then $T_{F}$ is  Weyl and so   $T_{M\oplus F}$ is  Fredholm. Hence, $T$ is pseudo B-Fredholm and $\mbox{ind}(T)=\mbox{ind}(T_{M\oplus F})=\mbox{ind}(T_{M})+\mbox{ind}(T_{F})=\mbox{ind}(T_{M})=\mbox{ind}(T_{A}).$
\end{proof}
\begin{prop}\label{prop} Let $T \in L(X)$ be  a  pseudo semi-B-Fredholm operator.  Then for every  strictly positive integer $n,$ the operator   $T^{n}$ is  pseudo semi-B-Fredholm and $\mbox{ind}(T^{n})=n.\mbox{ind}(T).$
\end{prop}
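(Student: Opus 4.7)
The plan is to lift the decomposition given by the pseudo semi-B-Fredholm structure of $T$ to a decomposition for $T^n$, and then reduce everything to two classical facts: the index multiplies under powers for semi-Fredholm operators, and quasi-nilpotency is preserved under powers.

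First I would invoke Definition \ref{dfn0} to choose $(M,N)\in Red(T)$ with $T_M$ semi-Fredholm and $T_N$ quasi-nilpotent, so that by Definition \ref{dfn1}, $\mbox{ind}(T)=\mbox{ind}(T_M)$. Since $M$ and $N$ are closed and $T$-invariant, they are automatically $T^n$-invariant, and $X=M\oplus N$ still holds; hence $(M,N)\in Red(T^n)$, with $(T^n)_M=(T_M)^n$ and $(T^n)_N=(T_N)^n$.

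Next I would verify the two ingredients of the decomposition. For the quasi-nilpotent part, the spectral mapping theorem yields $\sigma((T_N)^n)=\sigma(T_N)^n=\{0\}$, so $(T_N)^n$ is quasi-nilpotent. For the semi-Fredholm part, I would cite (or recall) the standard fact that if $S\in L(M)$ is semi-Fredholm then so is $S^n$ for every $n\ge 1$, with $\mbox{ind}(S^n)=n\cdot\mbox{ind}(S)$; this follows from the composition rule for the index of semi-Fredholm operators, applied inductively. Applying this to $S=T_M$ gives that $(T_M)^n$ is semi-Fredholm with $\mbox{ind}((T_M)^n)=n\cdot\mbox{ind}(T_M)$.

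Combining the two, $T^n$ satisfies the defining condition of a pseudo semi-B-Fredholm operator via the decomposition $(M,N)$, so by Definition \ref{dfn1} (whose well-posedness is guaranteed by Lemma \ref{lem0})
\[
\mbox{ind}(T^n)=\mbox{ind}((T^n)_M)=\mbox{ind}((T_M)^n)=n\cdot\mbox{ind}(T_M)=n\cdot\mbox{ind}(T).
\]
I do not anticipate any serious obstacle: the argument is essentially the observation that $Red(T)\subseteq Red(T^n)$ plus two classical facts. The only mild subtlety is making sure that the semi-Fredholm composition rule is being cited in the correct generality (including the case where $\alpha$ or $\beta$ may be infinite), but this is standard; the quasi-nilpotent side is immediate from the spectral mapping theorem.
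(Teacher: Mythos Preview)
Your proof is correct and follows essentially the same route as the paper: pick $(M,N)\in Red(T)$ with $T_M$ semi-Fredholm and $T_N$ quasi-nilpotent, observe that $(M,N)\in Red(T^n)$ with $(T^n)_M=(T_M)^n$ semi-Fredholm and $(T^n)_N=(T_N)^n$ quasi-nilpotent, and apply the classical index formula $\mbox{ind}(S^n)=n\cdot\mbox{ind}(S)$ for semi-Fredholm $S$. Your version is slightly more explicit (spelling out the spectral mapping argument and the role of Lemma~\ref{lem0}), but there is no substantive difference.
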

\begin{proof} Since $T$  is a  pseudo semi-B-Fredholm, then  there exists $(M, N) \in Red(T)$ such that $T_{M}$ is a  semi-Fredholm operator  and $T_{N}$ is quasi-nilpotent. So   $(M, N) \in Red(T^{n}),$  $T_{M}^{n}$ is a  semi-Fredholm operator  and $T_{N}^{n}$ is quasi-nilpotent. As it is well known that   $\mbox{ind}(T_{M}^{n})=n.\mbox{ind}(T_{M})$ then   $\mbox{ind}(T^{n})=n.\mbox{ind}(T).$
\end{proof}
Let   $T \in L(X)$ and let  $$\Delta(T):=\{m\in\N \,:\, \R(T^{m})\cap \mathcal{N}(T)=\R(T^{r})\cap \mathcal{N}(T),\,\forall r \in \N  \,  \, r\geq m \}.$$
The degree of stable iteration $\mbox{dis}(T)$ of $T$ is defined as $\mbox{dis}(T)=\mbox{inf}\Delta(T);$ with the infimum taken  $\infty$ in the case of empty set, see \cite{aiena,labrousse}. We say that $T$ is semi-regular if $\R(T)$ is closed and $\mbox{dis}(T)=0.$\\
Let  $r \in \N.$  It is easily  seen that $r\geq \mbox{dis}(T)$ if and only if  $\R(T)+ \mathcal{N}(T^{m})=\R(T)+ \mathcal{N}(T^{r}),\,\forall m \in \N  \,\text{  such that}  \, m\geq r.$
\begin{dfn}\cite{mbekhta} An operator $T \in L(X)$ is said   pseudo-Fredholm   if there exists $(M, N) \in Red(T)$ such that  $T_{M}$ is a semi-regular  operator  and $T_{N}$ is quasi-nilpotent. In this case, we say that  the pair $(M, N)$ is a generalized Kato decomposition associated to $T,$  and we write $(M,N) \in GKD(T)$ for brevity.
\end{dfn}
The next proposition  gives a characterization of pseudo semi-B-Fredholm operators.
\begin{prop}\label{prop1} Let $T \in L(X).$  $T$ is pseudo semi-B-Fredholm if and only if $T=T_{1}\oplus T_{2};$ where $T_{1}$ is a semi-Fredholm  and   semi-regular operator and $T_{2}$ is  quasi-nilpotent. In particular, a pseudo semi-B-Fredholm is  pseudo-Fredholm.
\end{prop}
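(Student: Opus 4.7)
The plan is to refine the defining decomposition of a pseudo semi-B-Fredholm operator by invoking Kato's classical theorem \cite[Theorem 4]{kato} on the semi-Fredholm piece. The ``if'' direction is immediate from Definition \ref{dfn0}: if $T=T_{1}\oplus T_{2}$ with $T_{1}$ semi-Fredholm and $T_{2}$ quasi-nilpotent, the corresponding splitting of $X$ is a pair in $Red(T)$ witnessing that $T$ is pseudo semi-B-Fredholm.

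For the ``only if'' direction, I would start with $(M,N)\in Red(T)$ such that $T_{M}$ is semi-Fredholm and $T_{N}$ is quasi-nilpotent, and apply Kato's theorem to $T_{M}\in L(M)$ to obtain a finer pair $(M_{1},M_{2})\in Red(T_{M})$ with $T_{M_{1}}$ semi-regular and $T_{M_{2}}$ nilpotent of finite degree on the finite-dimensional subspace $M_{2}.$ Then $M_{1}$ and $M_{2}\oplus N$ are closed, $T$-invariant, and $X=M_{1}\oplus(M_{2}\oplus N),$ so the candidate decomposition is $T=T_{M_{1}}\oplus(T_{M_{2}}\oplus T_{N}).$

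It then remains to verify that $T_{1}:=T_{M_{1}}$ is semi-Fredholm (semi-regularity is handed to us by Kato) and that $T_{2}:=T_{M_{2}}\oplus T_{N}$ is quasi-nilpotent. For the first, the direct-sum identities $\mathcal{N}(T_{M})=\mathcal{N}(T_{M_{1}})\oplus\mathcal{N}(T_{M_{2}})$ and $\R(T_{M})=\R(T_{M_{1}})\oplus\R(T_{M_{2}})$ together with $\dim M_{2}<\infty$ force $T_{M_{1}}$ to inherit whichever of the upper or lower semi-Fredholm property $T_{M}$ enjoys. For the second, $\sigma(T_{2})=\sigma(T_{M_{2}})\cup\sigma(T_{N})=\{0\}\cup\{0\}=\{0\},$ since a nilpotent operator direct summed with a quasi-nilpotent operator is again quasi-nilpotent.

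The ``in particular'' clause is then read off directly: the same pair $(M_{1},M_{2}\oplus N)\in Red(T)$ exhibits $T_{M_{1}}$ as semi-regular and $T_{M_{2}\oplus N}$ as quasi-nilpotent, which is precisely a generalized Kato decomposition of $T,$ so $T$ is pseudo-Fredholm. The only non-mechanical step is the transfer of the semi-Fredholm property from $T_{M}$ to $T_{M_{1}}$ inside the Kato decomposition; this is routine via the direct-sum identities above, but worth being explicit about since it is the one place where the Kato refinement is used in a non-trivial way.
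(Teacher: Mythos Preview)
Your proposal is correct and follows essentially the same route as the paper's proof: start from the defining pair $(M,N)$ with $T_{M}$ semi-Fredholm and $T_{N}$ quasi-nilpotent, apply Kato's theorem \cite[Theorem 4]{kato} to $T_{M}$ to obtain a refinement $(M_{1},M_{2})$ with $T_{M_{1}}$ semi-regular and $\dim M_{2}<\infty$, then regroup as $(M_{1},\,M_{2}\oplus N)\in GKD(T)$. Your write-up is in fact a bit more careful than the paper's, which simply asserts ``since $T_{M}$ is semi-Fredholm, then $T_{A}$ is semi-Fredholm'' without spelling out the direct-sum identities, and which (presumably by a slip) writes ``let $(M,N)\in GKD(T)$'' at the outset rather than invoking the pseudo semi-B-Fredholm decomposition.
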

\begin{proof} Let  $T$ be a  pseudo semi-B-Fredholm operator, and let $(M,N)\in GKD(T).$  From \cite[Theorem 4]{kato}, there exists $(A, B)\in GKD(T_{M})$ such that   $\mbox{dim}\,B<\infty.$  Since $T_{M}$ is semi-Fredholm, then $T_{A}$ is semi-Fredholm. On the other hand, it is easy to get   $(A, B\oplus N) \in GKD(T).$ The converse is obvious.
\end{proof}
Let $T \in L(X)$ be  a pseudo semi-B-Fredholm operator. According to Proposition \ref{prop1}, we focus in the sequel only on the  pairs  $(M,N) \in GKD(T)$ such that  $T_{M}$  is  semi-Fredholm. We  denote in the sequel \cite{mbekhta} by: the analytic core  and the  quasi-nilpotent part of $T$ defined respectively, by
$$\mathcal{K}(T)=\{x \in X : \exists \epsilon>0  \text{ and } \exists (u_{n})_{n} \subset X \text{ such that } x=u_{0}, Tu_{n+1}=u_{n} \text{ and } \|u_{n}\|\leq \epsilon^{n}\|x\| \,  \forall n \in \N\}$$
$$ \text{ and }\mathcal{H}_{0}(T)=\{x \in X : \lim\limits_{n \rightarrow \infty}\,\|T^{n}x\|^{\frac{1}{n}}=0\}.$$\\
For the sake of completeness, and to give to the reader a good overview of the subject, we include here the following  proposition.
\begin{prop}\label{propberkanisarih}{\rm\cite[Proposition 2.1]{berkani-sarih}}
Let $T\in L(X).$  If there exists an integer $n \in \N$ such that $\R(T^{n})$ is closed and such that the operator $T_{[n]}$ is an upper semi-Fredholm (resp. a lower semi-Fredholm) operator, then $\R(T^m)$ is closed, $T_{[m]}$ is an upper semi-Fredholm (resp. a lower semi-Fredholm) operator, for each $ m \geq n.$ Moreover, if $T_{[n]}$ is a Fredholm operator, then $T_{[m]}$ is a Fredholm operator and  $\mbox{ind}(T_{[m]})=\mbox{ind}(T_{[n]}),$ for each $ m\geq n.$
\end{prop}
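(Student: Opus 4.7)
The plan is to reduce to a single step $m=n+1$ and then apply a lemma of ``passage to the range'' once.

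\textbf{Step 1: Reduction.} First I would observe the tower identity $T_{[n+1]}=(T_{[n]})_{[1]}$: the map $T_{[n]}:\R(T^n)\to\R(T^n)$ has range $T(\R(T^n))=\R(T^{n+1})$, and its restriction to this range is exactly the restriction of $T$ to $\R(T^{n+1})$. Hence, by induction, it is enough to prove the following one-step claim: if $S\in L(Y)$ is upper (resp.\ lower) semi-Fredholm, then $\R(S)$ has closed image under $S$, $S_{[1]}:\R(S)\to\R(S)$ is again upper (resp.\ lower) semi-Fredholm, and $\mbox{ind}(S_{[1]})=\mbox{ind}(S)$ whenever $S$ is Fredholm.

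\textbf{Step 2: Kernel and range of $S_{[1]}$.} The kernel is transparent: $\mathcal{N}(S_{[1]})=\mathcal{N}(S)\cap\R(S)$, so $\dim\mathcal{N}(S_{[1]})\leq\dim\mathcal{N}(S)$, settling the upper semi-Fredholm side. For the range $\R(S_{[1]})=\R(S^{2})$ I would invoke the standard fact that, for an operator with closed range, $S(Z)$ is closed whenever both $Z$ and $Z+\mathcal{N}(S)$ are closed (this follows from the topological isomorphism $Y/\mathcal{N}(S)\cong\R(S)$ delivered by the open mapping theorem). Applying this with $Z=\R(S)$ reduces everything to showing $\R(S)+\mathcal{N}(S)$ is closed. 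In the upper case $\mathcal{N}(S)$ is finite dimensional so the sum of a closed and a finite dimensional subspace is closed; in the lower case $\R(S)$ already has finite codimension in $Y$, hence so does the larger $\R(S)+\mathcal{N}(S)$, and a finite codimension subspace containing a closed finite codimension subspace is closed. This simultaneously proves closedness of $\R(S^{2})$ in $\R(S)$ in both cases.

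\textbf{Step 3: Finite codimension and index.} For the lower semi-Fredholm side I still have to control $\text{codim}_{\R(S)}\R(S^{2})$. Here the key identification is
\[
\R(S)/\R(S^{2})\;\cong\;Y/(\R(S)+\mathcal{N}(S)),
\]
obtained by pushing the bijection $\bar S:Y/\mathcal{N}(S)\to\R(S)$ forward and noting that $\bar S^{-1}(\R(S^{2}))=(\R(S)+\mathcal{N}(S))/\mathcal{N}(S)$. The right hand side is a quotient of $Y/\R(S)$, which is finite dimensional when $S$ is lower semi-Fredholm; this finishes the lower case. For the Fredholm index, I would combine this isomorphism with the elementary decomposition $\dim\mathcal{N}(S)=\dim(\mathcal{N}(S)\cap\R(S))+\dim((\mathcal{N}(S)+\R(S))/\R(S))$ and the analogous splitting of $\text{codim}\,\R(S)$ through $\R(S)+\mathcal{N}(S)$; the cross terms cancel, yielding $\mbox{ind}(S_{[1]})=\dim\mathcal{N}(S)-\text{codim}\,\R(S)=\mbox{ind}(S)$.

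The main obstacle is the closedness of $\R(S^{2})$ in the lower semi-Fredholm case, where $\mathcal{N}(S)$ need not be finite dimensional: one cannot simply add a finite dimensional space to $\R(S)$, and the argument really has to exploit the finite codimension of $\R(S)$ to place $\R(S)+\mathcal{N}(S)$ between two closed subspaces differing by a finite dimensional quotient. Once this technical point is handled, the index computation reduces to elementary linear algebra.
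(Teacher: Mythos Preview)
Your argument is correct. The reduction $T_{[n+1]}=(T_{[n]})_{[1]}$, the closedness of $\R(S)+\mathcal{N}(S)$ in both the upper and lower cases, the isomorphism $\R(S)/\R(S^{2})\cong Y/(\R(S)+\mathcal{N}(S))$, and the cross-term cancellation for the index are all valid and cleanly organised.

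Note, however, that the paper does \emph{not} give its own proof of this proposition: it is quoted verbatim from \cite[Proposition~2.1]{berkani-sarih} and invoked as a black box. The only place where the paper carries out an argument of this type is the \emph{next} proposition (Proposition~\ref{propaznayzariouh}), where the index equality $\mbox{ind}(T_{[m]})=\mbox{ind}(T_{[n]})$ is extended to the semi-B-Fredholm case. There the authors use Kaashoek's identities $\beta(T_{[n]})=\dim\frac{X}{\R(T)+\mathcal{N}(T^{n})}$ and $\frac{\R(T)+\mathcal{N}(T^{n+1})}{\R(T)+\mathcal{N}(T^{n})}\cong\frac{\mathcal{N}(T_{[n]})}{\mathcal{N}(T_{[n+1]})}$ to obtain $\alpha(T_{[n]})=\alpha(T_{[n+1]})+k_{n}(T)$ and $\beta(T_{[n]})=\beta(T_{[n+1]})+k_{n}(T)$ with $k_{n}(T)=\dim\frac{\mathcal{N}(T_{[n]})}{\mathcal{N}(T_{[n+1]})}$. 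Your ``cross term'' $k=\dim\frac{\mathcal{N}(S)+\R(S)}{\R(S)}=\dim\frac{\mathcal{N}(S)}{\mathcal{N}(S)\cap\R(S)}$ is exactly $k_{n}(T)$ once one sets $S=T_{[n]}$, so your Step~3 is the same computation as the paper's, phrased without the external references. What you add beyond the paper is a self-contained treatment of the closedness of $\R(T^{m})$ (your Step~2), which the paper simply imports from Berkani--Sarih.
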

Now, we prove in the following proposition that  if $T$ is a semi-B-Fredholm operator then   $\mbox{ind}(T_{[m]})=\mbox{ind}(T_{[n]}),$ for each $ m\geq n;$ where $n$ is any integer   such that $\R(T^{n})$ is closed and  $T_{[n]}$ is  semi-Fredholm.
\begin{prop}\label{propaznayzariouh}
Let $T\in L(X).$  If there exists an integer $n \in \N$ such that $\R(T^{n})$ is closed and  $T_{[n]}$ is  semi-Fredholm then $\R(T^m)$ is closed, $T_{[m]}$ is  semi-Fredholm and  $\mbox{ind}(T_{[m]})=\mbox{ind}(T_{[n]}),$ for each $ m \geq n.$
\end{prop}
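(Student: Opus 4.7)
The first two assertions, that $\R(T^m)$ is closed and $T_{[m]}$ is semi-Fredholm for every $m \geq n$, are exactly the content of Proposition \ref{propberkanisarih}, so the only new ingredient to establish is the index equality, which in \cite{berkani-sarih} was proved only under the stronger Fredholm hypothesis. A routine induction will reduce the task to showing $\mbox{ind}(T_{[n+1]})=\mbox{ind}(T_{[n]})$. Setting $Y:=\R(T^n)$ and $S:=T_{[n]}\in L(Y)$, one notes that $T_{[n+1]}=S_{[1]}:\R(S)\to\R(S)$, so it suffices to prove the following key statement: \emph{if $S\in L(Y)$ is semi-Fredholm with $\R(S^2)$ closed, then $\mbox{ind}(S_{[1]})=\mbox{ind}(S)$.}

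The core of the argument will be the algebraic identity
$$\dim\bigl(\R(S)/\R(S^2)\bigr)=\dim\bigl(Y/(\NN(S)+\R(S))\bigr),$$
which I would verify by checking that the assignment $y+\NN(S)+\R(S)\mapsto Sy+\R(S^2)$ is a well-defined linear isomorphism between the two quotients (well-defined because $S(\NN(S)+\R(S))\subset \R(S^2)$, injective because $Sy\in\R(S^2)$ forces $y\in \NN(S)+\R(S)$, and surjective by inspection). Combined with the second isomorphism theorem $\NN(S)/(\NN(S)\cap \R(S))\cong (\NN(S)+\R(S))/\R(S)$ and the inclusion chain $\R(S^2)\subset \R(S)\subset Y$, this identity will yield
$$\alpha(S)-\alpha(S_{[1]})=\dim\bigl((\NN(S)+\R(S))/\R(S)\bigr)=\beta(S)-\beta(S_{[1]}),$$
which rearranges to $\mbox{ind}(S)=\mbox{ind}(S_{[1]})$, as desired.

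The delicate point, and what I expect to be the main obstacle, is interpreting these equalities when $\alpha$ or $\beta$ is infinite, since this is precisely the situation that falls outside the Berkani--Sarih framework. Because $S$ is semi-Fredholm, at least one of $\alpha(S),\beta(S)$ is finite, and the common term $\dim((\NN(S)+\R(S))/\R(S))$ is bounded by that finite one, so each of the two subtractions above is meaningful. If, for example, $S$ is upper semi-Fredholm with $\beta(S)=\infty$, then the identity together with $\dim((\NN(S)+\R(S))/\R(S))\leq \alpha(S)<\infty$ forces $\beta(S_{[1]})=\infty$ as well, and both indices equal $-\infty$; the lower semi-Fredholm case with $\alpha(S)=\infty$ is handled symmetrically. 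Once these infinite-dimensional cases are disposed of, the remaining Fredholm case is already covered by Proposition \ref{propberkanisarih}, though it also drops out of the same identity.
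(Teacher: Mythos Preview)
Your proposal is correct and follows essentially the same approach as the paper: both arguments reduce to the induction step and show that $\alpha(T_{[n]})-\alpha(T_{[n+1]})=\beta(T_{[n]})-\beta(T_{[n+1]})=k_n(T):=\dim\bigl(\mathcal{N}(T_{[n]})/\mathcal{N}(T_{[n+1]})\bigr)$, with $k_n(T)\le\min\{\alpha(T_{[n]}),\beta(T_{[n]})\}<\infty$ because $T_{[n]}$ is semi-Fredholm. The only cosmetic difference is that you localize by setting $S=T_{[n]}$ and verify the quotient isomorphism $Y/(\mathcal{N}(S)+\R(S))\cong \R(S)/\R(S^2)$ by hand, whereas the paper obtains the same identities by citing \cite[Lemma~2.2, Lemma~3.2]{kaashoek} and \cite[Lemma~2.1]{taylor}; your common term $\dim((\mathcal{N}(S)+\R(S))/\R(S))$ is exactly the paper's $k_n(T)$.
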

\begin{proof}
 Suppose that   there exists an integer $n \in \N$ such that $\R(T^{n})$ is closed and such that the operator $T_{[n]}$ is  semi-Fredholm.  The first part of this proposition is proved in Proposition \ref{propberkanisarih}. Let us to show that $\mbox{ind}(T_{[m]})=\mbox{ind}(T_{[n]}),$ for each $ m \geq n.$
From  \cite[Lemma 3.2]{kaashoek} we have $\beta(T_{[n]})=\mbox{dim}\frac{X}{\R(T)+\mathcal{N}(T^{n})}.$  Moreover, from  \cite[Lemma 2.2]{kaashoek}  we have $\frac{\R(T)+\mathcal{N}(T^{n+1})}{\R(T)+\mathcal{N}(T^{n})} \cong \frac{\mathcal{N}(T_{[n]})}{\mathcal{N}(T_{[n+1]})}.$   We then obtain from \cite[Lemma 2.1]{taylor} that  $\beta(T_{[n]})=\beta(T_{[n+1]})+k_n(T)$ and $\alpha(T_{[n]})=\alpha(T_{[n+1]})+k_n(T);$ where $k_n(T):=\mbox{dim}\frac{\mathcal{N}(T_{[n]})}{\mathcal{N}(T_{[n+1]})}.$ It is easily seen that $k_n(T) \leq \min\{\alpha(T_{[n]}),\beta(T_{[n]})\}.$ Since  $T_{[n]}$ is  semi-Fredholm, then $k_n(T)$ is finite. Hence $\mbox{ind}(T_{[n]})=\alpha(T_{[n]})-\beta(T_{[n]})=\alpha(T_{[n]})-k_n(T)-(\beta(T_{[n]})-k_n(T))=\alpha(T_{[n+1]})-\beta(T_{[n+1]})=\mbox{ind}(T_{[n+1]}).$ It then follows by induction that $\mbox{ind}(T_{[m]})=\mbox{ind}(T_{[n]}),$ for each $ m \geq n.$
\end{proof}
\begin{dfn} Let  $T \in L(X)$  be a semi-B-Fredholm operator.  The index of $T$ is defined as the index of  $T_{[n]};$ where $n$ is any integer   such that $\R(T^{n})$ is closed and  $T_{[n]}$ is  semi-Fredholm. From Proposition \ref{propaznayzariouh}, this definition is independent of the choice of the integer $n$ (see also the first Remark given in \cite[p. 459]{berkani-sarih}). Furthermore, if $T$ is a semi-Fredholm operator this reduces to the usual definition of the index.
\end{dfn}
\begin{rema}\label{rema1} Let $T \in L(X)$ be a pseudo-Fredholm and  $(M,N) \in GKD(T).$ Let $n \in \N^{*},$ then $(M,N) \in GKD(T^{n}).$ Hence  $\mathcal{N}(T^{n}_{M})=\mathcal{K}(T)\cap\mathcal{N}(T^{n})$ and $\R(T^{n}_{M})\oplus N=\R(T^{n})+ \mathcal{H}_{0}(T).$ Indeed,  since $T^{n}_{M}$ is semi-regular then \cite[Theorem 1.44, Theorem 1.63]{aiena} imply that $\mathcal{N}(T^{n}_{M})\subset \mathcal{K}(T^{n}_{M})=\mathcal{K}(T^{n})\subset \mathcal{K}(T)=\mathcal{K}(T_{M})\subset M.$ Thus  $\mathcal{N}(T^{n}_{M})=\mathcal{K}(T)\cap \mathcal{N}(T^{n}_{M})=\mathcal{K}(T)\cap M \cap \mathcal{N}(T^{n})=\mathcal{K}(T)\cap \mathcal{N}(T^{n}).$ On the other hand, as  $T_{M}$ is semi-regular then \cite[Corollary 2.38]{aiena} entails that $\mathcal{H}_{0}(T_{M})=T^{n}(\mathcal{H}_{0}(T_{M}))\subset \R(T^{n}_{M}).$ Thus $\R(T^{n})+ \mathcal{H}_{0}(T)=\R(T^{n}_{M})+\R(T^{n}_{N})+ \mathcal{H}_{0}(T_{M})+N=\R(T^{n}_{M})\oplus N.$
Consequently, it is easily seen that  $\R(T^{n}_{M})=\R(T^{m}_{M})$  if and only if $\R(T^{n})+ \mathcal{H}_{0}(T) =\R(T^{m})+ \mathcal{H}_{0}(T),$ for every integers $m, n\in \N^{*}.$ Moreover, $\mbox{codim}_{M}\,\R(T_{M}):=\mbox{dim}\,\frac{M}{\R(T_{M})}=\mbox{dim}\,\frac{X}{\R(T_{M})\oplus N}=\mbox{dim}\,\frac{X}{\R(T)+ \mathcal{H}_{0}(T)}.$
\end{rema}
The previous remark  allows us to introduce the following definition.
\begin{dfn}\label{dfnp} Let $T \in L(X)$ be a pseudo-Fredholm operator, and let $(M,N) \in GKD(T).$ We define the nullity, the  deficiency, the ascent and the descent of $T$  respectively, by $\alpha(T):=\dim\, \mathcal{N}(T_{M}),$  $\beta(T):=\mbox{codim}_{M}\,\R(T_{M}),$ $p(T):=\mbox{inf}\{n\in\N : \mathcal{N}(T^{n}_{M})=\mathcal{N}(T^{m}_{M}) \text{ for all integer } m \geq n\}$ and $q(T):=\mbox{inf}\{n\in\N : \R(T^{n}_{M})=\R(T^{m}_{M}) \text{ for all integer } m \geq n\}.$ From the previous remark,   the nullity, the  deficiency, the ascent and the descent of $T$  are independent of the choice of the generalized kato decomposition $(M,N)$ of $T.$
\end{dfn}
In particular, if $T$ is semi-regular then   $\alpha(T)=\dim\, \mathcal{N}(T)$ and  $\beta(T)=\mbox{codim}\,\R(T).$ And  if $T$ is a B-Fredholm, then  $\R(T^{d})$ is closed, $T_{[d]}$ is semi-regular,   $\alpha(T)=\alpha(T_{[d]})$ and  $\beta(T)=\beta(T_{[d]});$  where $d=\mbox{dis}(T),$ see  Theorem \ref{thm1} below and  \cite[Theorem 1.64]{aiena}.  And if  $T$ is semi-Fredholm, then there exists $n \in \N$ such that   $\alpha(T)=\mbox{dim}\,\mathcal{N}(T)-n$ and   $\beta(T)=\mbox{codim}\,\mathcal{R}(T)-n.$
\par From \cite[Theorem 1.22]{aiena} and Definition \ref{dfnp}, we deduce the relationships between the quantities $\alpha(T),$  $\beta(T),$ $p(T)$ and $q(T).$
\begin{rema}
Let $T$ be a pseudo-Fredholm operator.  We have the following statements.\\
(i) If  $p(T)<\infty$ then $\alpha(T)\leq \beta(T).$\\
(ii) If $q(T)<\infty$ then  $\alpha(T)\geq \beta(T).$\\
(iii) If  $\mbox{max}\,\{p(T), q(T)\}<\infty$ then  $p(T)=q(T)$ and   $\alpha(T)= \beta(T).$\\
(iv)  If $\mbox{max}\,\{\alpha(T), \beta(T)\}<\infty$ and $\mbox{min}\,\{p(T), q(T)\}<\infty,$  then  $p(T)=q(T).$
\end{rema}

\begin{lem} Let $T \in L(X)$ be a semi-regular operator. The following statements hold.\\
(i) If $\alpha(T)<\infty$ then  $T$ is one-to-one if and only if $p(T)<\infty.$\\
(ii)  If $\beta(T)<\infty$ then  $T$ is onto if and only if $q(T)<\infty.$\\
(iii) If $\mbox{max}\,\{\alpha(T), \beta(T)\}<\infty$ then $T$ is bijective  if and only if $p(T)=q(T)<\infty.$
\end{lem}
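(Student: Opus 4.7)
The plan is to reduce each part to two standard identities for semi-regular $T$: for every integer $n\geq 1$,
$$\mathcal{N}(T^{n})/\mathcal{N}(T^{n-1})\cong \mathcal{N}(T) \quad\text{and}\quad \R(T^{n-1})/\R(T^{n})\cong X/\R(T).$$
Both rest on the inclusion $\mathcal{N}(T^{k})\subset \R(T^{m})$ for all $k,m\in \N^{*}$, a standard consequence of semi-regularity. It is established by induction on $k$: the base case $k=1$ is the defining property of semi-regularity, and for the inductive step, given $x\in \mathcal{N}(T^{k+1})$ one has $Tx\in \mathcal{N}(T^{k})\subset \R(T^{m+1})$, so $Tx=T^{m+1}y$; then $x-T^{m}y\in \mathcal{N}(T)\subset \R(T^{m})$, and hence $x\in \R(T^{m})$.

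With this in hand, the first isomorphism is induced by the restriction $T\colon \mathcal{N}(T^{n})\to \mathcal{N}(T^{n-1})$, whose kernel is $\mathcal{N}(T)$ and whose surjectivity follows from the inclusion $\mathcal{N}(T^{n-1})\subset \R(T)$: any $y\in \mathcal{N}(T^{n-1})$ is of the form $Tx$ for some $x$, and then $T^{n}x=T^{n-1}y=0$ forces $x\in \mathcal{N}(T^{n})$. For the second isomorphism, I would introduce $\phi\colon X\to \R(T^{n-1})/\R(T^{n})$ by $\phi(x)=T^{n-1}x+\R(T^{n})$, note it is surjective, and check that $\ker \phi=\R(T)+\mathcal{N}(T^{n-1})=\R(T)$, using again the same inclusion.

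Given these two isomorphisms, the three parts follow by inspection. For (i), $\mathcal{N}(T^{n})=\mathcal{N}(T^{n-1})$ is equivalent to $\mathcal{N}(T)=\{0\}$, so $p(T)<\infty$ if and only if $T$ is injective. Part (ii) is symmetric: $\R(T^{n-1})=\R(T^{n})$ is equivalent to $\R(T)=X$, so $q(T)<\infty$ if and only if $T$ is onto. For (iii), if $T$ is bijective then $\mathcal{N}(T^{n})=\{0\}$ and $\R(T^{n})=X$ for every $n$, hence $p(T)=q(T)=0$; conversely, $p(T)<\infty$ and $q(T)<\infty$ together with (i) and (ii) give $T$ injective and surjective, hence bijective.

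The only delicate step is the inclusion $\mathcal{N}(T^{k})\subset \R(T^{m})$ underlying both isomorphisms; once that is available, the remaining arguments are short diagram chases, and the finiteness hypotheses on $\alpha(T)$ and $\beta(T)$ simply ensure that the invariants $p(T)$ and $q(T)$ are the natural ones to consider.
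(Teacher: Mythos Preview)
Your overall strategy is sound and, once a small slip is repaired, it actually proves more than stated (the finiteness hypotheses turn out to be unnecessary). The slip is in the justification of the first identity: the map $T\colon \mathcal{N}(T^{n})\to \mathcal{N}(T^{n-1})$ has kernel $\mathcal{N}(T)$, so the first isomorphism theorem yields $\mathcal{N}(T^{n})/\mathcal{N}(T)\cong \mathcal{N}(T^{n-1})$, \emph{not} $\mathcal{N}(T^{n})/\mathcal{N}(T^{n-1})\cong \mathcal{N}(T)$ as you claim. The isomorphism you want is induced instead by $T^{n-1}\colon \mathcal{N}(T^{n})\to \mathcal{N}(T)$, whose kernel is $\mathcal{N}(T^{n-1})$ and which is onto because $\mathcal{N}(T)\subset \R(T^{n-1})$. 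With that correction your argument for (i) goes through and never uses $\alpha(T)<\infty$; your argument for (ii) is already correct as written.

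Regarding the comparison with the paper: for (i) the paper uses precisely the map you (mis)described, $T\colon \mathcal{N}(T^{n+1})\to \mathcal{N}(T^{n})$, and then invokes the hypothesis $\alpha(T)<\infty$ to count dimensions and deduce $\alpha(T^{n})=n\,\alpha(T)$; so your intended route is a mild sharpening of the same idea. For (ii) the paper takes a genuinely different path: it passes to the adjoint $T^{*}$ (still semi-regular), converts $\beta(T)$ into $\alpha(T^{*})$ and $q(T)$ into $p(T^{*})$, and then applies part (i). Your direct quotient argument $\R(T^{n-1})/\R(T^{n})\cong X/\R(T)$ is more elementary---it avoids duality and closed-range issues altogether---and again shows that the hypothesis $\beta(T)<\infty$ is not actually needed. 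Part (iii) is handled identically in both approaches.
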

\begin{proof} (i) Let $n \in \N.$ Since  $T$ is semi-regular then   $T(\mathcal{N}(T^{n+1}))=\mathcal{N}(T^{n})\cap \R(T)=\mathcal{N}(T^{n}),$ and so the operator  $T: \mathcal{N}(T^{n+1})  \longrightarrow  \mathcal{N}(T^{n})$ is onto. As  $\alpha(T)<\infty,$ then  $\alpha(T^{n+1})=\alpha(T)+\alpha(T^{n}).$ By induction we obtain  $\alpha(T^{n})=n.\alpha(T).$ Consequently, $T$ is one-to-one if and only if $p(T)<\infty.$\\
(ii) Suppose that $q(T)<\infty.$  Since $T$ is semi-regular then from  \cite[Theorem 1.43]{aiena},  $T^{*}$ is semi-regular. As $\beta(T)<\infty$ then $\R(T)$ is closed  and $\alpha(T^{*})<\infty.$  From \cite[Lemma 1.26]{aiena} $q(T)=p(T^{*})<\infty$ and this implies by the first point that  $T$ is onto.\\
(iii) Is a direct consequence of the   first and the  second  points.
\end{proof}
 The next proposition gives a characterization of pseudo B-Fredholm  and generalized Drazin invertible operators. We recall \cite{benharrat, koliha}  that $T \in L(X)$ is said to be  left  generalized Drazin invertible (resp., right  generalized Drazin invertible,  generalized Drazin invertible) if $T=T_{1}\oplus T_{2};$ where $T_{1}$ is bounded below (resp., onto, invertible) and $T_{2}$ is quasi-nilpotent.
\begin{prop} Let $T \in L(X).$ The following assertions hold.\\
(i)  $T$ is pseudo B-Fredholm if and only if $T$  is pseudo-Fredholm and $\mbox{sup}\,\{\alpha(T), \beta(T)\}<\infty.$\\
(ii) $T$ is semi pseudo B-Fredholm if and only if $T$  is pseudo-Fredholm and $\mbox{inf}\,\{\alpha(T), \beta(T)\}<\infty.$\\
(iii) $T$ is left   generalized Drazin invertible if and only if $T$ is upper pseudo semi-B-Fredholm and $p(T)<\infty$  if and only if $T$  is pseudo-Fredholm and $p(T)=0.$\\
(iv) $T$ is right  generalized Drazin invertible if and only if $T$ is lower  pseudo semi-B-Fredholm and $q(T)<\infty$  if and only if $T$  is pseudo-Fredholm and $q(T)=0.$\\
(v) $T$ is generalized Drazin invertible if and only if $T$  is pseudo-Fredholm and $p(T)=q(T)<\infty.$
\end{prop}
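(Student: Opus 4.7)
The overall strategy is to reduce each assertion to a statement about the semi-regular part $T_M$ of a generalized Kato decomposition $(M,N)\in GKD(T)$, using Proposition~\ref{prop1} together with Definition~\ref{dfnp}. Recall that Proposition~\ref{prop1} supplies, for any pseudo semi-B-Fredholm operator, a decomposition $(M,N)\in GKD(T)$ in which $T_M$ is simultaneously semi-regular and semi-Fredholm; the same proof (applying Kato's decomposition to $T_M$) produces a $T_M$ that is upper semi-Fredholm, lower semi-Fredholm, or Fredholm as the hypothesis dictates, and Definition~\ref{dfnp} identifies $\alpha(T),\beta(T),p(T),q(T)$ with the corresponding quantities of $T_M$.

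For (i) and (ii) I would argue both directions in parallel. Forward: in the decomposition above, $T_M$ Fredholm (resp.\ semi-Fredholm) immediately forces $\sup\{\alpha(T),\beta(T)\}<\infty$ (resp.\ $\inf\{\alpha(T),\beta(T)\}<\infty$). Converse: starting from $(M,N)\in GKD(T)$, semi-regularity of $T_M$ makes $\R(T_M)$ closed, and then finite $\sup$ (resp.\ finite $\inf$) of $\alpha(T_M),\beta(T_M)$ upgrades $T_M$ to Fredholm (resp.\ semi-Fredholm), so $T$ is pseudo B-Fredholm (resp.\ pseudo semi-B-Fredholm).

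The technical heart of (iii)--(v) is the following observation for a semi-regular operator $S$: if $p(S)<\infty$ then $S$ is injective. Indeed, for $x\in\mathcal{N}(S)$, semi-regularity gives $y$ with $S^{p(S)}y=x$; then $Sx=0$ places $y$ in $\mathcal{N}(S^{p(S)+1})=\mathcal{N}(S^{p(S)})$, forcing $x=0$. For (iii), if $T$ is upper pseudo semi-B-Fredholm with $p(T)<\infty$, Proposition~\ref{prop1} yields $(M,N)\in GKD(T)$ with $T_M$ upper semi-Fredholm and semi-regular; the observation makes $T_M$ injective and hence (with closed range) bounded below, so $T=T_M\oplus T_N$ exhibits $T$ as left generalized Drazin invertible. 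The converses in (iii) are routine: a bounded-below operator is semi-regular and injective, so $p(T)=0$, and conversely $p(T)=0$ directly forces $\mathcal{N}(T_M)=\{0\}$. Statement (iv) is dual, now invoking part (ii) of the Lemma just above (semi-regular $T_M$ with $\beta(T_M)<\infty$ and $q(T_M)<\infty$ is onto).

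For (v), the forward direction is immediate since an invertible operator is semi-regular with vanishing ascent and descent. Conversely, if $T$ is pseudo-Fredholm with $p(T)=q(T)<\infty$, the semi-regular observation applied to $T_M$ gives $\alpha(T)=0$, and the Remark preceding the Lemma (which yields $\alpha(T)=\beta(T)$ when $\max\{p(T),q(T)\}<\infty$) then forces $\beta(T)=0$; hence $T_M$ is bijective and thus invertible by the open mapping theorem, so $T$ is generalized Drazin invertible. The main obstacle, if any, is verifying that Proposition~\ref{prop1} actually delivers a GKD whose semi-regular part matches the upper/lower/Fredholm type of the hypothesis in (iii)--(iv); this follows by tracing through the Kato decomposition used in the proof of that proposition, after which all five items become straightforward bookkeeping of the ascent/descent identities on $T_M$.
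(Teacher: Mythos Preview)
Your proof is correct. The paper itself leaves this proposition as an exercise (``We left its proof as an exercise to the reader''), so there is no argument to compare against; your reduction of each item to the semi-regular part $T_M$ via Proposition~\ref{prop1} and Definition~\ref{dfnp}, together with the direct verification that a semi-regular operator with finite ascent is injective, supplies precisely the expected details.
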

\begin{proof} We left its proof  as an exercise to the reader.
\end{proof}
{\bf Conjecture.} $T \in L(X)$ is pseudo B-Fredholm if and only if $\mbox{dim}\,\mathcal{K}(T)\cap\mathcal{N}(T)<\infty$ and $\mbox{dim}\,\frac{X}{\R(T)+ \mathcal{H}_{0}(T)}<\infty.$

\vspace{0.2cm}
\par Our next theorem gives a punctured neighborhood theorem for  pseudo semi-B-Fredholm operators. Which in turn it extends   \cite[Theorem 1.117]{aiena} and  \cite[Theorem 3.1]{tajmouati1} by using the notions of nullity, the deficiency and the  index of  pseudo semi-B-Fredholm operator. Hereafter      $\sigma_{se}(T)$  means  the semi-regular spectrum of $T.$
\begin{thm}\label{thm0}  Let $T \in L(X)$ be  a  pseudo semi-B-Fredholm operator, then there exists $\epsilon>0$ such that $B(0, \epsilon)\setminus \{0\} \subset (\sigma_{sf}(T))^C\cap(\sigma_{se}(T))^C.$ Moreover, $\alpha(T)=\alpha(T-\lambda I),$  $\beta(T)=\beta(T-\lambda I)$  and  $\mbox{ind}(T)=\mbox{ind}(T- \lambda I)$ for every $\lambda \in B_{0}.$
\end{thm}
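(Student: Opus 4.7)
The plan is to reduce the statement to the standard perturbation theory for operators which are simultaneously semi-regular and semi-Fredholm, via the direct-sum decomposition afforded by Proposition \ref{prop1}. By that proposition, I fix $(M,N)\in GKD(T)$ with $T_M$ both semi-regular and semi-Fredholm, and $T_N$ quasi-nilpotent; for every $\lambda\in\mathbb{C}$,
\[
T-\lambda I=(T_M-\lambda I)\oplus(T_N-\lambda I).
\]

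First I would invoke the classical perturbation result applied to $T_M$: since $T_M$ is semi-regular and semi-Fredholm, there exists $\epsilon_1>0$ such that for every $|\lambda|<\epsilon_1$, $T_M-\lambda I$ is again semi-regular and semi-Fredholm with
\[
\alpha(T_M-\lambda I)=\alpha(T_M),\quad \beta(T_M-\lambda I)=\beta(T_M),\quad \mbox{ind}(T_M-\lambda I)=\mbox{ind}(T_M).
\]
This combines the punctured neighborhood theorem for semi-Fredholm operators with the openness of the semi-regular resolvent and the local constancy of $\alpha,\beta$ on its connected components. Next, since $\sigma(T_N)=\{0\}$, I pick $\epsilon_2>0$ such that $T_N-\lambda I$ is invertible for $0<|\lambda|<\epsilon_2$; on that punctured disk $T_N-\lambda I$ is trivially semi-regular with $\alpha(T_N-\lambda I)=\beta(T_N-\lambda I)=0$ and $\mbox{ind}(T_N-\lambda I)=0$.

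Setting $\epsilon=\min(\epsilon_1,\epsilon_2)$ and $B_0=B(0,\epsilon)\setminus\{0\}$, for every $\lambda\in B_0$ both summands are semi-regular and semi-Fredholm. Since each of these two classes is stable under direct sum, $T-\lambda I$ is semi-regular and semi-Fredholm, which already gives $B_0\subset(\sigma_{sf}(T))^C\cap(\sigma_{se}(T))^C$. The three claimed equalities then follow from additivity of $\alpha$, $\beta$ and the index on direct sums (Remark \ref{rema0}(iii) for the index), combined with Definition \ref{dfnp}, which declares $\alpha(T)=\alpha(T_M)$ and $\beta(T)=\beta(T_M)$; because $T-\lambda I$ is itself semi-regular, its pseudo and ordinary nullity/deficiency coincide, so no notational ambiguity arises on the right-hand sides.

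The step requiring the most care is the simultaneous stability, for $T_M$, of semi-regularity together with the invariants $\alpha,\beta,\mbox{ind}$ in a full (not merely punctured) neighborhood of $0$: the perturbation has to include $\lambda=0$, and this is valid precisely because $T_M$ is itself semi-regular, so $0$ lies in the interior of the semi-regular resolvent of $T_M$, on whose connected components $\alpha$ and $\beta$ are constant. Everything else is a packaging of results already available in the literature and the earlier lemmas of the paper.
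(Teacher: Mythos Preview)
Your proof is correct and follows essentially the same approach as the paper: decompose via Proposition~\ref{prop1}, apply standard perturbation theory to the semi-regular semi-Fredholm summand $T_M$, use invertibility of $T_N-\lambda I$ on the punctured disk, and reassemble by direct sum. The only cosmetic difference is that the paper cites Mbekhta's pseudo-Fredholm perturbation theorem directly on $T$ to obtain $B_0\subset(\sigma_{se}(T))^C$, whereas you obtain the same inclusion by arguing that the direct sum of the semi-regular $T_M-\lambda I$ with the invertible $T_N-\lambda I$ is semi-regular.
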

\begin{proof} Since $T$ is  a pseudo semi-B-Fredholm, by Proposition \ref{prop1} we have $T=T_{1}\oplus T_{2};$ where $T_{1}$ is semi-Fredholm  and  semi-regular and $T_{2}$ is quasi-nilpotent. From  the  punctured neighborhood theorem for  semi-Fredholm operators and  \cite[Proposition 4.1]{mbekhta} (which is also valid in Banach spaces, see \cite[Theorem 2.1]{bouamama}), there exists $\epsilon>0$ such that $B_{0}:=B(0, \epsilon)\setminus\{0\} \subset (\sigma_{sf}(T_{1}))^C\cap (\sigma(T_{2}))^C\cap(\sigma_{se}(T))^C\subset(\sigma_{sf}(T))^C\cap(\sigma_{se}(T))^C$ and  $\mbox{ind}(T_{1})=\mbox{ind}(T_{1}- \lambda I)$ for every $\lambda \in B_{0}.$ Since $T_{2}$ is quasi-nilpotent (so is pseudo B-Weyl), then we have  $\mbox{ind}(T_{2})=\mbox{ind}(T_{2}- \lambda I)=0$ for every $\lambda \in \C.$ Moreover,  from the point (iii) of Remark \ref{rema0}, we conclude that   $\mbox{ind}(T)=\mbox{ind}(T- \lambda I)$ for every $\lambda \in B_{0}.$ On  the other hand,  Proposition \ref{prop1} and  \cite[Theorem 1.60]{aiena} imply that there exists $\epsilon>0$ such that $\alpha(T)=\alpha(T-\lambda I)$ and $\beta(T)=\beta(T-\lambda I)$ for all $\lambda \in B_{0}.$
\end{proof}

The next corollary is a consequence of Theorem \ref{thm0}.
\begin{cor}\label{cor1}  Let $T \in L(X).$ Then\\
(i) $\sigma_{upbf}(T),$ $\sigma_{lpbf}(T),$ $\sigma_{spbf}(T),$ $\sigma_{pbf}(T),$ $\sigma_{upbw}(T),$ $\sigma_{lpbw}(T),$ $\sigma_{spbw}(T)$ and  $\sigma_{pbw}(T)$  are  a compact subsets of $\C.$\\
(ii) If $\Omega$ is a component of $(\sigma_{upbf}(T))^C$ or of $(\sigma_{lpbf}(T))^C,$ then the index $\mbox{ind}(T -\lambda I)$ is constant as $\lambda$ ranges over $\Omega.$
\end{cor}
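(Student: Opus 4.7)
The plan is to prove (i) by showing that each of the eight listed spectra is both bounded and closed in $\mathbb{C}$, and to derive (ii) directly from Theorem \ref{thm0}. Boundedness is essentially free: if $|\lambda| > \|T\|$ then $T - \lambda I$ is invertible, hence a fortiori pseudo B-Fredholm (and pseudo B-Weyl) of index $0$, so each of the eight spectra sits inside the closed disc of radius $\|T\|$.

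For closedness I would handle all eight cases uniformly. Let $\lambda_0$ lie in the complement of one of these spectra. Using Proposition \ref{prop1}, together with the obvious upper/lower/Fredholm/Weyl refinement (passing to the Weyl case via Remark \ref{rema0}(ii)), write $T - \lambda_0 I = S_1 \oplus S_2$ where $S_1$ is semi-regular and of the relevant semi-Fredholm or semi-Weyl type and $S_2$ is quasi-nilpotent. For $\lambda$ close to $\lambda_0$ and distinct from it, $S_2 - (\lambda - \lambda_0) I$ is invertible because $\sigma(S_2) = \{0\}$, while by the classical punctured neighborhood theorem $S_1 - (\lambda - \lambda_0) I$ retains its semi-Fredholm type and its index. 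Consequently $T - \lambda I = (S_1 - (\lambda - \lambda_0) I) \oplus (S_2 - (\lambda - \lambda_0) I)$ is itself semi-Fredholm of the appropriate type, and in the Weyl variants the index inequality $\leq 0$, $\geq 0$ or $= 0$ transfers automatically. Hence $T - \lambda I$ belongs to the corresponding class for every $\lambda$ near $\lambda_0$, and together with $\lambda = \lambda_0$ itself this shows each complement is open.

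For (ii), given $\lambda_0$ in a component $\Omega$ of $(\sigma_{upbf}(T))^C$ (the argument for $(\sigma_{lpbf}(T))^C$ is identical), I apply Theorem \ref{thm0} to the pseudo semi-B-Fredholm operator $T - \lambda_0 I$ to obtain an $\epsilon > 0$ such that $\mbox{ind}(T - \lambda I) = \mbox{ind}(T - \lambda_0 I)$ for every $\lambda \in B(\lambda_0, \epsilon) \setminus \{\lambda_0\}$. Including $\lambda_0$ itself trivially, the map $\lambda \mapsto \mbox{ind}(T - \lambda I)$ is locally constant on $\Omega$, hence constant since $\Omega$ is connected and the index takes values in $\mathbb{Z} \cup \{\pm \infty\}$ endowed with the discrete topology on the relevant fibers.

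The only genuinely non-trivial inputs are the stability statement already packaged in Theorem \ref{thm0} and the classical punctured neighborhood theorem for semi-Fredholm operators; the principal obstacle is simply the bookkeeping across the eight variants in the closedness step, which becomes routine once the decomposition $T - \lambda_0 I = S_1 \oplus S_2$ from Proposition \ref{prop1} is in hand.
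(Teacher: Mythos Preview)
Your proposal is correct and matches the paper's intended route: the paper simply records Corollary~\ref{cor1} as a consequence of Theorem~\ref{thm0} without further proof, and your argument is precisely the unpacking of that claim. Your closedness step is in effect a restatement of the proof of Theorem~\ref{thm0} itself (decompose via Proposition~\ref{prop1}, perturb each summand), while (ii) is the standard local-constancy-on-connected-components argument; the handling of the Weyl variants through Remark~\ref{rema0}(ii) is exactly right.
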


For proving \cite[Theorem 2.4]{berkani-castro},  the authors used the characterization of B-Fredholm  operators in the case of Hilbert spaces based on the Kato's decomposition of quasi-Fredholm operators \cite{labrousse}.
As an  application of   Lemma  \ref{lem0}, we extend   \cite[Theorem 2.4]{berkani-castro} to the general case of  Banach space. Precisely, we shows that if $T \in L(X)$  is B-Fredholm, then the index of $T$ as a B-Fredholm operator coincides with  its index as a pseudo B-Fredholm.

  In the sequel,  for a  semi-B-Fredholm operator  $T \in L(X),$   we take $n$ an   integer  such that $\R(T^{n})$ is closed and  $T_{[n]}$ is  semi-Fredholm.
\begin{thm}\label{thm1}
$T \in  L(X)$ is  a B-Fredholm operator if and only if $T = T_1 \oplus T_2$ such that  $T_1$ is  Fredholm and semi-regular and $T_2$ is nilpotent.
Moreover,  in this case $T$ is pseudo B-Fredholm  and $\alpha(T)=\alpha(T_{1}),$  $\beta(T)=\beta(T_{1}),$  $p(T)=p(T_{1}),$    $q(T)=q(T_{1})$  and $\mbox{ind} (T) = \mbox{ind}(T_1)=\mbox{ind} (T_{[n]}).$
\end{thm}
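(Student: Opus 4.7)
The plan is to establish the two directions of the equivalence, and then read off each numerical invariant directly from the resulting decomposition.

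For the forward direction, assume $T$ is B-Fredholm. I would invoke Müller's characterisation \cite[Theorem 7]{muller} to produce $(M,N)\in \mathrm{Red}(T)$ with $T_{M}$ Fredholm and $T_{N}$ nilpotent. Because $T_M$ is in particular semi-Fredholm, Kato's decomposition theorem \cite[Theorem 4]{kato} (used exactly as in the proof of Proposition \ref{prop1}) yields $(A,B)\in \mathrm{Red}(T_M)$ with $T_A$ semi-regular and $T_B$ nilpotent on a finite-dimensional space $B$. Since $T_M$ is Fredholm and $\dim B<\infty$, $T_A$ is Fredholm as well; and since a direct sum of nilpotent operators is nilpotent, setting $T_1=T_A$ and $T_2=T_B\oplus T_N$ gives the required decomposition $T=T_1\oplus T_2$.

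For the converse, suppose $T=T_1\oplus T_2$ with $T_1$ Fredholm and semi-regular and $T_2$ nilpotent of degree $d$. For every $n\geq d$ one has $T^{n}=T_1^{n}\oplus 0$, so $\mathcal{R}(T^{n})=\mathcal{R}(T_1^{n})$, which is closed (since $T_1$ is Fredholm, $T_1^{n}$ is Fredholm), and, under the identification $\mathcal{R}(T^{n})\subset X_1$, the restriction $T_{[n]}$ coincides with $(T_1)_{[n]}$. Proposition \ref{propberkanisarih} then ensures that $(T_1)_{[n]}$ is Fredholm with $\mathrm{ind}((T_1)_{[n]})=\mathrm{ind}(T_1)$, so $T$ is B-Fredholm, and combining with Proposition \ref{propaznayzariouh} I obtain $\mathrm{ind}(T_{[n]})=\mathrm{ind}(T_1)$ for every admissible $n$.

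It remains to read off the pseudo B-Fredholm statement and the four remaining equalities from the decomposition $T=T_1\oplus T_2$. The pair $(X_1,X_2)$ lies in $\mathrm{Red}(T)$ with $T_1$ Fredholm and $T_2$ quasi-nilpotent, so by Definition \ref{dfn0} the operator $T$ is pseudo B-Fredholm; since $T_1$ is moreover semi-regular, $(X_1,X_2)\in GKD(T)$. Definition \ref{dfn1} then gives $\mathrm{ind}(T)=\mathrm{ind}(T_1)$, completing the chain $\mathrm{ind}(T)=\mathrm{ind}(T_1)=\mathrm{ind}(T_{[n]})$; Definition \ref{dfnp}, applied with this GKD, yields $\alpha(T)=\alpha(T_1)$, $\beta(T)=\beta(T_1)$, $p(T)=p(T_1)$ and $q(T)=q(T_1)$ immediately from their defining formulas.

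The main obstacle, as I see it, is the clean identification $T_{[n]}\equiv (T_1)_{[n]}$ for $n\geq d$: one must verify that $\mathcal{R}(T^{n})$ actually sits in $X_1$ (because the $X_2$-component of $T^{n}$ vanishes), is closed there, and that the restriction of $T$ to this range really coincides with $T_1$ restricted to $\mathcal{R}(T_1^{n})$. This is the bridge that lets me transport the Berkani–Sarih formula \cite[Proposition 2.1]{berkani-sarih} for $T_1$ to $T$ itself, and hence match the pseudo B-Fredholm index $\mathrm{ind}(T)=\mathrm{ind}(T_1)$ with the classical B-Fredholm index $\mathrm{ind}(T_{[n]})$.
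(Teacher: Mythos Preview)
Your argument is correct, and the overall structure matches the paper's: both use M\"uller's \cite[Theorem~7]{muller} together with Kato's decomposition \cite[Theorem~4]{kato} for the equivalence, and both read off $\alpha,\beta,p,q$ and $\mathrm{ind}(T)=\mathrm{ind}(T_1)$ from the fact that $(X_1,X_2)\in GKD(T)$.

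The one genuinely different step is how you establish $\mathrm{ind}(T_{[n]})=\mathrm{ind}(T_1)$. The paper argues by perturbation: it invokes Theorem~\ref{thm0} and \cite[Corollary~4.7]{Grabiner} to find a punctured disc $B_0$ on which $\mathrm{ind}(T-\lambda I)=\mathrm{ind}(T_{[n]})$ and $\mathrm{ind}(T_1-\lambda I)=\mathrm{ind}(T_1)$, and then compares the two via $\mathrm{ind}(T-\lambda I)=\mathrm{ind}(T_1-\lambda I)+\mathrm{ind}(T_2-\lambda I)=\mathrm{ind}(T_1-\lambda I)$ for $\lambda\in B_0$. You instead make the direct identification $T_{[n]}=(T_1)_{[n]}$ for $n\ge d$ (since $\mathcal{R}(T^n)=\mathcal{R}(T_1^n)\subset X_1$), apply Proposition~\ref{propberkanisarih} to $T_1$ to get $\mathrm{ind}((T_1)_{[n]})=\mathrm{ind}(T_1)$, and then use Proposition~\ref{propaznayzariouh} to cover the remaining admissible $n$. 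Your route is more elementary and entirely internal to the Berkani--Sarih framework, while the paper's route illustrates the punctured-neighborhood machinery just developed; both are valid, and yours is arguably cleaner for this particular theorem.
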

\begin{proof}
From     \cite[Theorem 7]{muller} and  \cite[Theorem 4]{kato},  we have  $T$ is B-Fredholm  if and only if  $T = T_1 \oplus T_2;$ where   $T_1$ is  Fredholm and semi-regular and $T_2$ is nilpotent. Thus, every B-Fredholm operator is a pseudo B-Fredholm operator. Let us  to show that if such a decomposition exists for $T,$ then $\mbox{ind}(T) = \mbox{ind} (T_1)=\mbox{ind} (T_{[n]}).$\\
Suppose that $T = T_1 \oplus T_2$ is  a B-Fredholm operator. Since $T_1$ is Fredholm, then  by  Theorem \ref{thm0} and \cite[Corollary 4.7]{Grabiner},   there exists $\epsilon >0$ such that $B_{0}:=B(0, \epsilon)\setminus \{0\} \subset  (\sigma_{e}(T))^C,$  $\mbox{ind}(T_1 - \lambda I) =\mbox{ind}(T_1)$ and  $\mbox{ind}(T - \lambda I) =\mbox{ind}(T)=\mbox{ind}(T_{[n]})$ for all $\lambda \in B_{0}.$  Let $\lambda \in B_{0},$ as  $T_2$ is nilpotent, then $\lambda \notin \sigma(T_2).$ Hence $T-\lambda I = T_1-\lambda I \oplus T_2 - \lambda I$ is a Fredholm operator and $\mbox{ind}(T - \lambda I)=\mbox{ind}(T_1 - \lambda I).$ Consequently, $\mbox{ind} (T) = \mbox{ind}(T_1)=\mbox{ind} (T_{[n]}).$ The rest of the proof comes from the Remark \ref{rema1}.
\end{proof}

We don't know if Theorem \ref{thm1}  can be extended to the  case of semi-B-Fredholm operators.  Whereas  the following proposition gives  a version of Theorem \ref{thm1} for  semi-B-Fredholm operators.  Note that in the case of $X$  is a Hilbert space, it is proved in \cite[Theorem 2.6]{berkani-sarih} that  $T$  is a semi-B-Fredholm operator if and only if $T = T_1 \oplus T_2$ such that  $T_1$ is  semi-Fredholm and $T_2$ is nilpotent. In this case  and if    $T$ is an upper   semi-B-Fredholm operator, then $\mbox{ind} (T_{1})=\mbox{ind} (T),$ see \cite[Proposition 2.9]{berkani-koliha}.
\begin{prop}\label{prop2}
$T \in L(X)$ is  a  semi-B-Fredholm  and   pseudo-Fredholm if and only if  $T$  is a direct sum  of a  semi-Fredholm operator and a nilpotent operator.  Moreover, the index of $T$ as a semi-B-Fredholm coincides with its index as a  pseudo semi-B-Fredholm.
\end{prop}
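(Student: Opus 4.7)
The strategy is to combine the generalized Kato decomposition coming from the pseudo-Fredholm hypothesis with the structural information that the semi-B-Fredholm hypothesis yields on the iterated ranges, and to pass this information through the direct-summand structure on $R(T^n)$.

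For the easy direction ($\Leftarrow$), suppose $T = T_1 \oplus T_2$ with $T_1$ semi-Fredholm and $T_2$ nilpotent of degree $d$. For $n \geq d$ one has $\R(T^n) = \R(T_1^n)$, which is closed, and $T_{[n]}$ coincides with $(T_1)_{[n]}$, semi-Fredholm by Proposition \ref{propberkanisarih}; thus $T$ is semi-B-Fredholm. Applying Kato's decomposition \cite{kato} to $T_1$ writes $T_1 = T_1' \oplus T_1''$ with $T_1'$ semi-regular semi-Fredholm and $T_1''$ finite-rank nilpotent, so $T = T_1' \oplus (T_1'' \oplus T_2)$ is a GKD whose second summand is nilpotent (hence quasi-nilpotent), and $T$ is pseudo-Fredholm. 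Both indices reduce to $\mbox{ind}(T_1') = \mbox{ind}(T_1)$, using Proposition \ref{propaznayzariouh} on the semi-B-Fredholm side.

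For the converse ($\Rightarrow$), fix $(M,N) \in GKD(T)$ with $T_M$ semi-regular and $T_N$ quasi-nilpotent, and choose $n$ with $\R(T^n)$ closed and $T_{[n]}$ semi-Fredholm. Since $T_M^n$ is again semi-regular, $\R(T_M^n)$ is closed; the splitting $\R(T^n) = \R(T_M^n) \oplus \R(T_N^n)$ then forces $\R(T_N^n)$ closed, and $T_{[n]}$ decomposes as $(T_M)_{[n]} \oplus (T_N)_{[n]}$ with both summands semi-Fredholm of the same type. On the $M$-side, semi-regularity yields $\mathcal{N}(T_M) \subset \R(T_M^n)$ and, via the standard isomorphism $M/\R(T_M) \cong \R(T_M^n)/\R(T_M^{n+1})$ (whose kernel computation uses $\mathcal{N}(T_M^n) \subset \R(T_M)$), one obtains $\alpha((T_M)_{[n]}) = \alpha(T_M)$ and $\beta((T_M)_{[n]}) = \beta(T_M)$; since $\R(T_M)$ is already closed, $T_M$ is semi-Fredholm.

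The main obstacle is upgrading $T_N$ from quasi-nilpotent to genuinely nilpotent. My plan is to observe that $(T_N)_{[n]}$ inherits quasi-nilpotency from $T_N$ (through $\|(T_N)_{[n]}^k\|^{1/k} \leq \|T_N^k\|^{1/k} \to 0$) while also being semi-Fredholm; because $(T_N)_{[n]} - \lambda I$ is invertible for every $\lambda \neq 0$, continuity of the index on the semi-Fredholm open set forces $\mbox{ind}((T_N)_{[n]}) = 0$, so $(T_N)_{[n]}$ is in fact Fredholm. Kato's theorem then writes $(T_N)_{[n]} = A \oplus B$ with $A$ semi-regular Fredholm and $B$ finite-rank nilpotent; Mbekhta's local constancy of $\alpha$ and $\beta$ on the semi-regular resolvent set (\cite[Proposition 4.1]{mbekhta}), combined with the fact that $A - \lambda I$ is invertible for $0 < |\lambda|$ small, yields $\alpha(A) = \beta(A) = 0$, so $A$ is invertible and $\sigma(A) = \emptyset$, i.e., $A$ acts on the trivial space. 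Consequently $(T_N)_{[n]}$ is nilpotent of some degree $k$, giving $\R(T_N^{n+k}) = T_N^k(\R(T_N^n)) = \{0\}$ and hence $T_N^{n+k} = 0$. The index coincidence is then immediate since $\mbox{ind}(T_{[n]}) = \mbox{ind}((T_M)_{[n]}) + \mbox{ind}((T_N)_{[n]}) = \mbox{ind}(T_M) + 0$, which is exactly the index of $T$ as a pseudo semi-B-Fredholm operator.
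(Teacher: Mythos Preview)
Your proof is correct and follows essentially the same route as the paper's: both take a generalized Kato decomposition $(M,N)$, use semi-regularity of $T_M$ to identify $\alpha(T_M),\beta(T_M)$ with $\alpha((T_M)_{[n]}),\beta((T_M)_{[n]})$ (so $T_M$ is semi-Fredholm), and then argue that the quasi-nilpotent summand $T_N$ is actually nilpotent, after which the index identity $\mbox{ind}(T_{[n]})=\mbox{ind}(T_M)$ drops out. The only difference is one of detail: the paper dispatches the nilpotency of $T_N$ in a single line (``$T_N$ is Drazin invertible, since $T$ is semi-B-Fredholm''), whereas you unpack this by showing that $(T_N)_{[n]}$ is a quasi-nilpotent Fredholm operator and then invoking Kato's decomposition together with the local constancy of $\alpha,\beta$ on its semi-regular part to force that part to be trivial.
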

\begin{proof} Let $(M, N)\in GKD(T),$ then $T_{M}$ is semi-regular  so that  $(T_{M})_{[m]}$ is also  semi-regular and $\alpha(T_{M})=\alpha((T_{M})_{[m]})$ and $\beta(T_{M})=\beta((T_{M})_{[m]})$  for every $m\in \N.$ Since $T$ is semi-B-Fredholm  then  $T_{M}$ is  semi-B-Fredholm, and so $T_{M}$ is  a  semi-Fredholm  operator. On the other hand, $T_{N}$ is  Drazin invertible, since $T$ is semi-B-Fredholm. Hence $T_{N}$ is  nilpotent.
\par Conversely,  let $(M, N) \in Red(T)$ such that $T_{M}$ is semi-Fredholm and $T_{N}$ is nilpotent. Then there exists $(A, B) \in Red(T_{M})$ such that $T_{A}$ is semi-Fredholm and semi-regular  and $T_{B}$ is nilpotent. So $(A, B\oplus N) \in Red(T)$ and $T_{B\oplus N}$ is nilpotent of degree $d.$ Hence $\R(T^{d})=\R(T^{d}_{A})$ and  $\mathcal{N}(T_{A})=\mathcal{N}(T_{[d]})$ and $T(A)\oplus (B\oplus N)=\mathcal{N}(T^{d})+\R(T).$ Therefore $\alpha(T_{A})=\alpha(T_{[d]})$ and $\beta(T_{A})=\beta(T_{[d]}).$  So  $\R(T^{d})$ is closed  and $T_{[d]}$ is semi-Fredholm, and then $T$ is   semi-B-Fredholm.   Moreover,  $\mbox{ind} (T) =\mbox{ind} (T_{[d]})=\mbox{ind} (T_{[n]}).$
\end{proof}
The following corollary \ref{cor2}  shows that  Theorem \ref{thm1} can be extended to  semi-B-Fredholm operators  in the case of $X$ is a Hilbert space, since every closed subspace of a Hilbert space is complemented. Before that we recall some basic definitions which will be needed later.
\begin{dfn}\cite{kato, mbekhta-muller}  Let $T\in  L(X).$\\
(i) $T$ is called a quasi-Fredholm operator of degree $d$ if   $d=\mbox{dis}(T) \in \N$ and $R(T^{d+1})$ is closed.\\
(ii) We said that   $T$ is decomposable in the  Kato's sense of degree $d$ if  there exists  $(M, N)\in Red(T)$  such that  $T_{M}$ is semi-regular     and $T_{N}$ is nilpotent of  degree $d.$
\end{dfn}
It is well known \cite{labrousse} that  the degree $d$  of a decomposable operator $T\in L(X)$  in the   Kato's sense  is  well defined.
\begin{cor}\label{cor2}  Let $T \in L(X)$ be an upper  semi-B-Fredholm  (resp., a lower  semi-B-Fredholm) operator    such that  $\R(T)+\mathcal{N}(T^{d})$  (resp.,  $\R(T^{d})\cap \mathcal{N}(T)$) has a complementary in $X;$ where $d=\mbox{dis}(T).$ Then $T$ is pseudo semi-B-Fredholm and $\mbox{ind} (T)=\mbox{ind} (T_{[n]}).$
\end{cor}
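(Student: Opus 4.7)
The plan is to reduce the corollary to Proposition \ref{prop2} by establishing that $T$ is pseudo-Fredholm; once this is in hand, Proposition \ref{prop2} simultaneously delivers pseudo semi-B-Fredholmness and the equality of indices. Concretely, since $T$ is semi-B-Fredholm with $d=\mbox{dis}(T)$, $T$ is quasi-Fredholm of degree $d$, so $\R(T^{d})$ is closed and $T_{[d]}$ is semi-regular on $\R(T^{d})$. In the upper case, $\alpha(T_{[d]})=\mbox{dim}\,(\R(T^{d})\cap\mathcal{N}(T))<\infty$, so $\R(T^{d})\cap\mathcal{N}(T)$ is automatically (finite-dimensional, hence) complemented in $X$, while the corollary's hypothesis provides a closed complement of $\R(T)+\mathcal{N}(T^{d})$. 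In the lower case the roles reverse: $\beta(T_{[d]})=\mbox{codim}\,(\R(T)+\mathcal{N}(T^{d}))<\infty$ is automatic, and the hypothesis supplies a closed complement of $\R(T^{d})\cap\mathcal{N}(T)$. Either way, \emph{both} of the subspaces $\R(T^{d})\cap\mathcal{N}(T)$ and $\R(T)+\mathcal{N}(T^{d})$ admit closed complements in $X$.

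Next, I would invoke the Mbekhta--Muller \cite{mbekhta-muller} characterization of decomposability in Kato's sense for Banach space quasi-Fredholm operators: under the two complementation conditions just obtained, there exists $(M,N)\in Red(T)$ with $T_{M}$ semi-regular and $T_{N}$ nilpotent of degree $d$. Because $T_{N}^{d}=0$ one has $\R(T^{d})\subset M$, so $\mathcal{N}(T_{[d]})=\mathcal{N}(T_{M})$; combined with $\alpha(T_{[d]})<\infty$ (resp.\ $\beta(T_{[d]})<\infty$) this forces $T_{M}$ to be upper (resp.\ lower) semi-Fredholm as well as semi-regular. Thus $T$ is pseudo-Fredholm. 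Being simultaneously semi-B-Fredholm and pseudo-Fredholm, $T$ falls into the hypotheses of Proposition \ref{prop2}, which at once yields that $T$ is a direct sum of a semi-Fredholm and a nilpotent operator---so in particular pseudo semi-B-Fredholm---and that $\mbox{ind}(T_{[n]})=\mbox{ind}(T)$, the desired equality.

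The main obstacle I anticipate is the middle step: precisely invoking, or if necessary reconstructing, the Kato-type decomposition from the complementation data. In the Banach space setting one must choose \emph{coherent} closed complements of $\R(T^{d})\cap\mathcal{N}(T)$ inside $\mathcal{N}(T^{d})$ and of $\R(T)+\mathcal{N}(T^{d})$ in $X$ so that the resulting pair $(M,N)$ genuinely reduces $T$ and $T_{M}$ inherits semi-regularity; this algebraic bookkeeping is the real content of the argument, whereas the rest of the proof merely collates results already established in the paper.
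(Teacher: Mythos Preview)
Your proposal is correct and follows essentially the same route as the paper's own proof: both arguments observe that $T$ is quasi-Fredholm of degree $d$ with one of the two relevant subspaces automatically complemented (by finite dimension or finite codimension), invoke the Kato-type decomposition for quasi-Fredholm Banach space operators under the two complementation conditions (the paper cites Labrousse's remark rather than Mbekhta--M\"{u}ller, but the content is the same), check that $T_{M}$ is semi-Fredholm via $\alpha(T_{M})=\alpha(T_{[d]})$ (resp.\ $\beta(T_{M})=\beta(T_{[d]})$), and finish with Proposition~\ref{prop2}. The only cosmetic difference is that the paper treats the trivial case $d=0$ separately before producing the decomposition.
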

\begin{proof} If   $T \in L(X)$ is  an upper  semi-B-Fredholm then  from \cite[Proposition 2.5]{berkani-sarih}, $T$ is quasi-Fredholm operator  of degree $d$ and the subspace  $\mathcal{N}(T_{[d]})=\R(T^{d})\cap \mathcal{N}(T)$ is  of finite  dimension. If $d=0$ then $T$ is an upper semi-Fredholm, since $\R(T^{d})$ is closed and $T_{[d]}$ is  upper semi-Fredholm. Thus,   $T$ is a pseudo semi-B-Fredholm operator. Suppose that $d>0,$ by assumption   we have      $\R(T)+\mathcal{N}(T^{d})$ is complemented. So  $T$ is decomposable in the   Kato's sense of degree $d$   (see  \cite[Remark p. 206]{labrousse}), that's  there exists  $(M, N)\in Red(T)$  such that  $T_{M}$ is semi-regular     and $T_{N}^{d}=0.$  Thus $\R(T_{M})$ is closed,  $\alpha(T_{[d]})=\alpha((T_{M})_{[d]})=\alpha(T_{M})<\infty$ and $\beta(T_{[d]})=\beta((T_{M})_{[d]})=\beta(T_{M}).$    Hence $T_{M}$ is an upper semi-Fredholm operator.  By Proposition $\ref{prop2}$ we deduce the  desired result. The case of $T$ is a lower semi-B-Fredholm operator with $\R(T^{d})\cap \mathcal{N}(T)$  has a complementary goes similarly.
\end{proof}
  Let $M$ be a subset of  $X$ and  $N$ a subset of $X^{*}.$ The annihilator of $M$ and the pre-annihilator of  $N$  are  the closed subspaces  defined respectively, by
$$M^{\perp} := \{f \in X^{*} : f (x) = 0 \text{ for every }  x \in M\},$$
and
$${}^{\perp}N := \{x \in X : f (x) = 0 \text{ for every } f \in N\}.$$
Let $T \in L(X)$ and let $(M,N)\in Red(T),$   we denote by  $P_{M}$  the projection on $M$  according to the decomposition of $X=M\oplus N.$
\begin{lem}\label{lem12} Let $T \in L(X)$ and let  $(M,N) \in Red(T)$ such that  $\R(T_{M})\oplus N$ is closed, then $\R(T^{*}_{N^{\perp}})\oplus M^{\perp}$ is closed in  $\sigma(X^{*}, X);$ where $\sigma(X^{*}, X)$ is the weak*-topology on $X^{*}.$
\end{lem}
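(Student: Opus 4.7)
The plan is to identify $\R(T^{*}_{N^{\perp}}) \oplus M^{\perp}$ with the annihilator $\mathcal{N}(T_{M})^{\perp}$ inside $X^{*}$; since $\mathcal{N}(T_{M})$ is a norm-closed subspace of $X$, its annihilator is automatically $\sigma(X^{*}, X)$-closed, and the conclusion follows immediately.

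First I would note that $\R(T_{M})$ is closed in $M$: because $N$ complements $M$ topologically, we have $\R(T_{M}) = M \cap (\R(T_{M}) \oplus N)$, which is an intersection of two closed subspaces of $X$ by hypothesis. Next I would set up the dual decomposition $X^{*} = N^{\perp} \oplus M^{\perp}$ that comes from $X = M \oplus N$; the sum is direct since $N^{\perp} \cap M^{\perp} = (M + N)^{\perp} = \{0\}$. The restriction map $g \mapsto g|_{M}$ gives a Banach space isomorphism $N^{\perp} \to M^{*}$, whose inverse sends $h \in M^{*}$ to the functional $m + n \mapsto h(m)$. A quick computation using $T$-invariance of $M$ shows $(T^{*} g)(m) = g(Tm) = g(T_{M} m)$ for $g \in N^{\perp}$ and $m \in M$, so this identification intertwines $T^{*}_{N^{\perp}}$ with $(T_{M})^{*}$ and sends $\R(T^{*}_{N^{\perp}})$ onto $\R((T_{M})^{*})$.

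Applying Banach's closed range theorem to $T_{M} \in L(M)$ then gives $\R((T_{M})^{*}) = \mathcal{N}(T_{M})^{\perp}$ inside $M^{*}$. Writing an arbitrary $\phi \in X^{*}$ as $\phi = \phi_{1} + \phi_{2}$ with $\phi_{1} \in N^{\perp}$ and $\phi_{2} \in M^{\perp}$, one has $\phi|_{M} = \phi_{1}|_{M}$, so $\phi$ belongs to $\R(T^{*}_{N^{\perp}}) \oplus M^{\perp}$ if and only if $\phi_{1}|_{M}$ annihilates $\mathcal{N}(T_{M})$, equivalently $\phi$ annihilates $\mathcal{N}(T_{M}) \subset M$. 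Hence
\[
\R(T^{*}_{N^{\perp}}) \oplus M^{\perp} = \mathcal{N}(T_{M})^{\perp},
\]
and the right-hand side is $\sigma(X^{*}, X)$-closed as the annihilator of a closed subspace of $X$.

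The step I expect to require the most care is verifying the intertwining $T^{*}_{N^{\perp}} \leftrightarrow (T_{M})^{*}$ under the identification $N^{\perp} \cong M^{*}$, and keeping careful track of the two different meanings of the annihilator symbol (inside $M^{*}$ versus inside $X^{*}$) so that the final equality is really an equality of subsets of $X^{*}$.
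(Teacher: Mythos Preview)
Your proof is correct. Both your argument and the paper's arrive at the same key identity $\R(T^{*}_{N^{\perp}})\oplus M^{\perp}=\mathcal{N}(T_{M})^{\perp}$, after which weak-$*$ closedness is immediate; the difference is in how that identity is reached. The paper first notes (via a pre-annihilator computation) that $\mathcal{N}(T_{M})^{\perp}$ equals the weak-$*$ closure of $\R(T^{*}_{N^{\perp}})\oplus M^{\perp}$, and then establishes the remaining inclusion $\mathcal{N}(T_{M})^{\perp}\subset \R(T^{*}_{N^{\perp}})\oplus M^{\perp}$ by an explicit Hahn--Banach construction: it builds the isomorphism $\overline{T}\colon X/\mathcal{N}(T_{M})\to \R(T_{M})\oplus N$, extends $\overline{f}\circ\overline{T}^{-1}$ to some $g\in X^{*}$, and checks that $f-T^{*}g\in M^{\perp}$. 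You instead package this step into the closed range theorem for $T_{M}\in L(M)$ after identifying $N^{\perp}\cong M^{*}$ and $T^{*}_{N^{\perp}}\cong (T_{M})^{*}$. Your route is more conceptual and arguably cleaner, since the paper's Hahn--Banach argument is essentially re-deriving the surjectivity half of the closed range theorem in this particular setting. Your flagged concern about the intertwining and the two annihilator conventions (in $M^{*}$ versus in $X^{*}$) is exactly the point that needs care, and you handle it correctly.
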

\begin{proof}
Suppose that $\R(T_{M})\oplus N$ is closed, and let $\overline{T} \in L(\frac{X}{\mathcal{N}(T_{M})}, \R(T_{M})\oplus N)$ the operator defined  by $\overline{T}(\overline{x})=T(P_{M}(x))+ P_{N}(x).$ It is easily seen that $\overline{T}$ is well defined and is  an isomorphism. On the  other hand, as $\mathcal{N}(T_{M})^{\perp}=\overline{\R(T^{*}_{N^{\perp}})\oplus M^{\perp}}^{\sigma(X^{*}, X)}$ then it suffices to show that  $\mathcal{N}(T_{M})^{\perp}\subset \R(T^{*}_{N^{\perp}})\oplus M^{\perp}.$ Let $f\in \mathcal{N}(T_{M})^{\perp}$ and let  $\overline{f}\in  L(\frac{X}{\mathcal{N}(T_{M})},\C)$ the linear form defined by  $\overline{f}(\overline{x})=f(x).$ Let $g\in X^{*}$ be the extension of  $\overline{f}(\overline{T})^{-1}$  given by the Hahn-Banach theorem. Hence $f=T^{*}(g)+f(I-T)P_{M} \in   \R(T^{*})+ M^{\perp}=\R(T^{*}_{N^{\perp}})\oplus M^{\perp}.$
\end{proof}
Let  $X$ be a Banach space, it is well known that $\mbox{dim}X \leq \dim X^{*};$ where $X^{*}$ is the  topological dual. In the next theorem we  do not distinguish between $\mbox{dim}X$ and $\dim X^{*},$ that is if $\dim X= \infty$  then we write  $\dim X=\dim X^{*}=\infty.$\\
The proof of the next theorem  is based on  the  classical  theorems  \cite[Theorem A.1.8,  Theorem A. 1.9]{laursen-neumann}.
\begin{thm}\label{thm24}  If $T \in L(X)$ is   a    pseudo-Fredholm then   $T^{*}$  is    pseudo-Fredholm, $\alpha(T)=\beta(T^{*}),$  $\beta(T)=\alpha(T^{*}),$ $p(T)=q(T^{*})$ and $q(T)=p(T^{*}).$ In particular, if $T$ is pseudo semi-B-Fredholm  then $T^{*}$ is pseudo semi-B-Fredholm and     $\mbox{ind}(T)=-\mbox{ind}(T^{*}).$
\end{thm}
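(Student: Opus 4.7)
The plan is to dualize a generalized Kato decomposition of $T$. Pick $(M,N)\in GKD(T)$, so $T_{M}$ is semi-regular and $T_{N}$ is quasi-nilpotent. Since $X=M\oplus N$ with $M,N$ closed and $T$-invariant, the pair $(N^{\perp},M^{\perp})$ is a norm direct sum decomposition of $X^{*}$ into closed $T^{*}$-invariant subspaces, i.e.\ $(N^{\perp},M^{\perp})\in Red(T^{*})$. My goal is to promote this to a generalized Kato decomposition for $T^{*}$.

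The key identification is the canonical isometric isomorphism $N^{\perp}\cong (X/N)^{*}\cong M^{*}$ (and symmetrically $M^{\perp}\cong N^{*}$) given by restriction of functionals. A direct computation shows that under this isomorphism $(T^{*})_{N^{\perp}}$ corresponds to $(T_{M})^{*}$ and $(T^{*})_{M^{\perp}}$ to $(T_{N})^{*}$. Two standard facts then finish the structural part: the Banach-space dual of a semi-regular operator is semi-regular (\cite[Theorem 1.43]{aiena}), so $(T^{*})_{N^{\perp}}$ is semi-regular; and $\sigma((T_{N})^{*})=\sigma(T_{N})=\{0\}$, so $(T^{*})_{M^{\perp}}$ is quasi-nilpotent. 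Thus $(N^{\perp},M^{\perp})\in GKD(T^{*})$, which already gives that $T^{*}$ is pseudo-Fredholm.

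The four scalar equalities reduce to closed-range duality applied to $T_{M}$. Since $T_{M}$ is semi-regular, $\R(T_{M})$ is closed, so the standard identities $\dim\mathcal{N}(S)=\mbox{codim}\,\R(S^{*})$ and $\mbox{codim}\,\R(S)=\dim\mathcal{N}(S^{*})$ (valid when $\R(S)$ is closed) give $\alpha(T)=\beta(T^{*})$ and $\beta(T)=\alpha(T^{*})$. For the ascent and descent, every iterate $T_{M}^{n}$ remains semi-regular and hence has closed range; by the closed range theorem $\mathcal{N}(T_{M}^{n})^{\perp}=\R((T_{M}^{*})^{n})$, and taking annihilators converts the stabilization of $\mathcal{N}(T_{M}^{n})$ into that of $\R((T_{M}^{*})^{n})$, yielding $p(T)=q(T^{*})$; symmetrically $q(T)=p(T^{*})$.

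Finally, if $T$ is pseudo semi-B-Fredholm then by Proposition \ref{prop1} we may choose $T_{M}$ to be semi-Fredholm (and semi-regular). Banach-space duality for semi-Fredholm operators gives $(T_{M})^{*}$ semi-Fredholm with $\mbox{ind}((T_{M})^{*})=-\mbox{ind}(T_{M})$; hence $(T^{*})_{N^{\perp}}$ is semi-Fredholm, $T^{*}$ is pseudo semi-B-Fredholm, and $\mbox{ind}(T^{*})=-\mbox{ind}(T)$. The main technical point is the isomorphism $(T^{*})_{N^{\perp}}\cong (T_{M})^{*}$: natural at the algebraic level but requiring careful bookkeeping to verify that the identification intertwines the two operators. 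Lemma \ref{lem12} handles the parallel question of range closedness in $\sigma(X^{*},X)$ and can serve as a backup for confirming the closedness properties of $(T^{*})_{N^{\perp}}$ should the direct identification feel delicate.
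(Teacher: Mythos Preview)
Your argument is correct and follows the same overall strategy as the paper: dualize a generalized Kato decomposition $(M,N)$ of $T$ to obtain $(N^{\perp},M^{\perp})\in GKD(T^{*})$, then read off the scalar invariants from the semi-regular part. The execution differs in one useful way. You package the duality via the canonical isometric isomorphism $N^{\perp}\cong M^{*}$ intertwining $(T^{*})_{N^{\perp}}$ with $(T_{M})^{*}$, and then invoke standard closed-range duality for the single operator $T_{M}$ on $M$ (and its semi-regular powers $T_{M}^{n}$); this yields all four equalities at once and makes the semi-Fredholm/index statement an immediate consequence of classical duality for semi-Fredholm operators. The paper instead works with annihilators in $X^{*}$ directly, citing \cite[Theorem~3.3]{mbekhta} for $(N^{\perp},M^{\perp})\in GKD(T^{*})$ and computing $\mathcal{N}((T^{*}_{N^{\perp}})^{n})=(N\oplus\R(T_{M}^{n}))^{\perp}$ and $\mathcal{N}(T_{M}^{n})={}^{\perp}(\R((T^{*}_{N^{\perp}})^{n})\oplus M^{\perp})$ by hand; in particular it needs Lemma~\ref{lem12} (the $\sigma(X^{*},X)$-closedness of $\R(T^{*}_{N^{\perp}})\oplus M^{\perp}$) to obtain $\alpha(T)=\beta(T^{*})$. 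Your route is a bit more streamlined and avoids that lemma entirely; the paper's route is more explicit at the level of subspaces of $X^{*}$. Both are valid under the paper's convention that infinite dimensions are not distinguished from those of their duals.
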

\begin{proof}
 Suppose that $T$  is a  pseudo-Fredholm. Then  there exists $(M, N) \in GKD(T).$  From (which is also true in the Banach spaces) \cite[Theorem 3.3]{mbekhta} that   $(N^{\perp}, M^{\perp})\in GKD(T^{*}).$  Let $n \in \N,$   then  $\mathcal{N}((T^{*}_{N^{\perp}})^{n})=(N+\R(T^{n}))^{\perp}=(N\oplus \R(T_{M}^{n}))^{\perp}.$  As  $N\oplus \R(T_{M})$ is closed then $\alpha(T^{*})=\alpha(T^{*}_{N^{\perp}})=\mbox{dim}\,(\frac{X}{N\oplus\R(T_{M})})^{*}=\mbox{dim}\,(\frac{M}{\R(T_{M})})^{*}=\beta(T_{M})=\beta(T),$  and from Remark \ref{rema1} we then obtain $p(T^{*})=p(T^{*}_{N^{\perp}})=q(T_{M})=q(T).$ Since  $\mathcal{N}(T^{n})={}^{\perp}\R((T^{*})^{n})$  then $\mathcal{N}(T_{M}^{n})={}^{\perp}(\R((T^{*})^{n})+M^{\perp})={}^{\perp}(\R((T^{*}_{N^{\perp}})^{n})\oplus M^{\perp}).$  Using again  Remark \ref{rema1} we deduce $p(T)=q(T^{*}).$ On the other hand, the previous lemma shows that  $\R(T^{*}_{N^{\perp}})\oplus M^{\perp}$ is   closed in  $\sigma(X^{*}, X)$ and   then     $(\mathcal{N}(T_{M})^{*}) \cong  \frac{X^{*}}{M^{\perp}\oplus \R(T^{*}_{N^{\perp}})}\cong \frac{N^{\perp}}{\R(T^{*}_{N^{\perp}})}.$ Thus  $\alpha(T)=\alpha(T_{M})=\mbox{dim}\,(\mathcal{N}(T_{M})^{*})=\beta(T^{*}_{N^{\perp}})=\beta(T^{*}).$ Consequently,  if   $T_{M}$ is  semi-Fredholm  then   $T^{*}_{N^{\perp}}$ is semi-Fredholm  and $\mbox{ind}(T)=\mbox{ind}(T_{M})=\alpha(T_{M})-\beta(T_{M})=\beta(T_{N^{\perp}}^{*})-\alpha(T_{N^{\perp}}^{*})=-\mbox{ind}(T^{*}_{N^{\perp}})=-\mbox{ind}(T^{*}).$
\end{proof}
From the previous results, we obtain  the next corollary.
\begin{cor} If  $T\in L(X)$ is  a B-Fredholm operator,  then $\R(T^{*})+\mathcal{N}(T^{*d})$ is closed in $\sigma(X^{*}, X)$ and $T^{*}$ is  a  B-Fredholm operator  with    $\alpha(T_{[d]})=\beta(T^{*}_{[d]}),$  $\beta(T_{[d]})=\alpha(T^{*}_{[d]})$  and  $\mbox{ind}(T)=\mbox{ind}(T_{[d]})=-\mbox{ind}(T^{*}_{[d]})=-\mbox{ind}(T^{*});$ where $d=\mbox{dis}(T).$
\end{cor}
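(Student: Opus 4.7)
The plan is to combine Theorem \ref{thm1} (which encodes a B-Fredholm operator as the direct sum of a semi-regular Fredholm and a nilpotent) with Theorem \ref{thm24} (duality for pseudo-Fredholm operators), and to close out the weak$^{*}$-closedness statement by Lemma \ref{lem12}. By Theorem \ref{thm1} and \cite[Theorem 7]{muller}, \cite[Theorem 4]{kato}, I would start by picking $(M,N)\in Red(T)$ with $T_{M}$ Fredholm and semi-regular and $T_{N}$ nilpotent of degree $d=\mbox{dis}(T)$. This decomposition makes $T$ a pseudo B-Fredholm operator, so Theorem \ref{thm24} is available.

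Dualising the decomposition yields $X^{*}=N^{\perp}\oplus M^{\perp}$ with $T^{*}_{N^{\perp}}\cong (T_{M})^{*}$ Fredholm (by Schauder) and semi-regular (by \cite[Theorem 1.43]{aiena}), and $T^{*}_{M^{\perp}}\cong (T_{N})^{*}$ nilpotent of degree $d$; applying Theorem \ref{thm1} in the reverse direction then identifies $T^{*}$ as a B-Fredholm operator. Theorem \ref{thm24} directly gives $\alpha(T)=\beta(T^{*})$, $\beta(T)=\alpha(T^{*})$ and $\mbox{ind}(T)=-\mbox{ind}(T^{*})$, while Theorem \ref{thm1} applied separately to $T$ and to $T^{*}$ gives $\alpha(T_{[d]})=\alpha(T_{M})=\alpha(T)$, $\beta(T_{[d]})=\beta(T_{M})=\beta(T)$, $\mbox{ind}(T)=\mbox{ind}(T_{[d]})$ and the analogous equalities for $T^{*}$. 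Chaining these identifications produces $\alpha(T_{[d]})=\beta(T^{*}_{[d]})$, $\beta(T_{[d]})=\alpha(T^{*}_{[d]})$ and $\mbox{ind}(T)=\mbox{ind}(T_{[d]})=-\mbox{ind}(T^{*}_{[d]})=-\mbox{ind}(T^{*})$.

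It remains to verify the weak$^{*}$-closedness of $\R(T^{*})+\mathcal{N}(T^{*d})$. The key algebraic step, and the main technical point of the argument, is the identity
\[
\R(T^{*})+\mathcal{N}(T^{*d}) \;=\; \R(T^{*}_{N^{\perp}})\oplus M^{\perp},
\]
which I would establish as follows: from $T^{*}_{M^{\perp}}$ nilpotent of degree $d$ one gets $\mathcal{N}(T^{*d})=\mathcal{N}((T^{*}_{N^{\perp}})^{d})\oplus M^{\perp}$ and $\R(T^{*}_{M^{\perp}})\subset M^{\perp}$; and the semi-regularity of $T^{*}_{N^{\perp}}$ gives the standard iterated inclusion $\mathcal{N}((T^{*}_{N^{\perp}})^{d})\subset \R(T^{*}_{N^{\perp}})$ (obtained by inducting on the exponent using $\mathcal{N}(S)\subset \bigcap_{n}\R(S^{n})$). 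The mirror computation on the primal side shows $\R(T)+\mathcal{N}(T^{d})=\R(T_{M})\oplus N$, so in particular $\R(T_{M})\oplus N$ is closed in $X$. Lemma \ref{lem12} then delivers the closedness of $\R(T^{*}_{N^{\perp}})\oplus M^{\perp}$ in $\sigma(X^{*},X)$, which by the boxed identity is exactly what is claimed. Apart from this identification, everything else is bookkeeping on top of Theorems \ref{thm1} and \ref{thm24}.
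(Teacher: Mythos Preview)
Your proposal is correct and follows essentially the same route as the paper: take the Kato-type decomposition of $T$ from Theorem \ref{thm1}/\cite{muller}, dualise it, identify $\R(T)+\mathcal{N}(T^{d})=\R(T_{M})\oplus N$ (and the dual analogue $\R(T^{*})+\mathcal{N}(T^{*d})=\R(T^{*}_{N^{\perp}})\oplus M^{\perp}$), then apply Lemma \ref{lem12} for the weak$^{*}$-closedness and Theorem \ref{thm24} for the nullity/deficiency/index identities. The only step the paper spells out that you assume from the start is that the nilpotency degree of $T_{N}$ coincides with $d=\mbox{dis}(T)$; the paper proves $d=d'$ explicitly, but this is a standard fact about Kato decompositions and your later computation of $\R(T)+\mathcal{N}(T^{d})$ would recover it anyway.
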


\begin{proof}
We know from \cite[Theorem 7]{muller} that  $T$ is decomposable in the Kato's sense of degree $d^{'}.$  More precisely, there exists $(M, N) \in  Red(T)$ such that  $T_{M}$ is semi-regular, $T_{N}$ is nilpotent of degree $d^{'}$ and  $N \subset \mathcal{N}(T^{d}).$ Thus   $N=\mathcal{N}(T_{N}^{d})=\mathcal{N}(T_{N}^{d^{'}})$ and then $d\geq d^{'}.$  Let us to show that $d=d^{'}.$   Let $m\geq d^{'},$ since $T_{M}$ is semi-regular then  $\R(T)+\mathcal{N}(T^{m})=\R(T_{N})+\mathcal{N}(T_{N}^{m})+\R(T_{M})+\mathcal{N}(T_{M}^{m})=N\oplus \R(T_{M})=\R(T_{N})+\mathcal{N}(T_{N}^{d})+\R(T_{M})+\mathcal{N}(T_{M}^{d})=\R(T)+\mathcal{N}(T^{d}).$ Hence $d=d^{'}$ and       $\R(T_{M})\oplus N = \R(T)+\mathcal{N}(T^{d})$ is closed.   On the other hand, it is well known that $T^{*}$  is decomposable in the Kato's sense of degree   $\mbox{dis}(T^{*})=d,$ and from Theorem \ref{thm24} we have  $\alpha(T_{[d]})=\beta(T^{*})$ and   $\beta(T_{[d]})=\alpha(T^{*}).$  Hence   $T^{*}$ is a    B-Fredholm,  and  from  Lemma \ref{lem12} and  what precedes we have  $\R(T^{*}_{N^{\perp}})\oplus M^{\perp}=\R(T^{*})+\mathcal{N}(T^{*d})$ is  closed in $\sigma(X^{*}, X).$  Consequently,   $\alpha(T_{[d]})=\beta(T^{*}_{[d]}),$  $\beta(T_{[d]})=\alpha(T^{*}_{[d]})$  and  $\mbox{ind}(T)=\mbox{ind}(T_{[d]})=-\mbox{ind}(T^{*}_{[d]})=-\mbox{ind}(T^{*}).$
\end{proof}

\goodbreak

{\small \noindent Zakariae Aznay,\\  Laboratory (L.A.N.O), Department of Mathematics,\\Faculty of Science, Mohammed I University,\\  Oujda 60000 Morocco.\\
aznay.zakariae@ump.ac.ma\\

{\small \noindent  Abdelmalek Ouahab,\\  Laboratory (L.A.N.O), Department of Mathematics,\\Faculty of Science, Mohammed I University,\\  Oujda 60000 Morocco.\\
ouahab05@yahoo.fr\\

 \noindent Hassan  Zariouh,\newline Department of
Mathematics (CRMEFO),\newline
 \noindent and laboratory (L.A.N.O), Faculty of Science,\newline
  Mohammed I University, Oujda 60000 Morocco.\\
 \noindent h.zariouh@yahoo.fr


\begin{thebibliography}{20}

\bibitem{aiena} P. Aiena, {\it Fredholm and Local Spectral Theory II, with Application to Weyl-type Theorems},
Springer Lecture Notes of Math no. 2235, (2018).

\bibitem{amouch} M.  Amouch,  M.  Karmouni  and  A.  Tajmouati, {\it Spectra  originated  from  Fredholm  theory  and  Browder's  theorem},  Commun. Korean Math. Soc., {\bf 33} (2018),  853--869.


\bibitem {berkani} M. Berkani,  {\it On a class of quasi-Fredholm operators}, Integr. Equ. and Oper. Theory,  {\bf 34} (1999), 244--249.

\bibitem{berkani-castro} M. Berkani and N. Castro, {\it Unbounded B-Fredholm operators on Hilbert spaces}, Proc. Edinb. Math. Soc., {\bf 51} (2008),   285--296.

\bibitem{berkani-koliha} M. Berkani and  J. J. Koliha, {\it Weyl type theorems for bounded linear operators}, Acta Sci. Math. (Szeged), {\bf 69} (2003), 359--376.


\bibitem{berkani-sarih} M. Berkani and M. Sarih, {\it On semi-B-Fredholm operators},   Glasg. Math. J., {\bf 43} (2001), 457--465.


\bibitem{boasso} E. Boasso, {\it Isolated spectral points and Koliha-Drazin invertible elements in quotient Banach algebras and homomorphism ranges}, Mathematical Proceedings of the Royal Irish Academy, 115A, (2015), 1--15.

\bibitem{rwassa} M. D. Cvetkovi\'{c},  S\v{C}. \v{Z}ivkovi\'{c}-Zlatanovi\'{c},  {\it Generalized Kato decomposition and essential spectra}, Complex Anal. Oper. Theory, {\bf 11} (2017), 1425--1449.

\bibitem{bouamama} W. Bouamama, {\it Op\'erateurs    Pseudo    Fredholm    dans    les    espaces    de Banach}, Rend. Circ. Mat. Parelmo, (2004), 313--324.

\bibitem{Grabiner}  S. Grabiner, {\it Uniform ascent and descent of bounded operators},  J. Math. Soc. Japan,  {\bf 34} (1982), 317--337.

\bibitem{benharrat}  K. Hocine, M. Benharrat, B. Messirdi, {\it Left and right generalized Drazin invertible operators}, Linear and Multilinear Algebra, {\bf 63} (2015), 1635--1648.

\bibitem{kaashoek} M. A. Kaashoek, {\it Ascent, Descent, Nullity and Defect, a Note on a Paper by A. E. Taylor},  Math. Annalen,  {\bf 172} (1967), 105--115.

\bibitem{kato} T. Kato, {\it Perturbation theory for nullity, deficiency and other quantities of linear operators}, J. Anal. Math. {\bf 6} (1958), 261--322.

\bibitem{koliha}  J. Koliha, {\it  A generalized Drazin inverse. Glasgow Mathematical Journal}, {\bf 38} (1996), 367--381.



\bibitem{labrousse} J. P. Labrousse, {\it Les op\'erateurs quasi-Fredholm: une g\'en\'eralisation des op\'erateurs semi Fredholm}, Rend. Circ. Math. Palermo, {\bf 29} (1980), 161--258.

\bibitem{laursen-neumann} K. B. Laursen., M. M.  Neumann, {\it An Introduction to Local Spectral Theory}, Clarendon Press, Oxford, (2000).

\bibitem{mbekhta}  M. Mbekhta, {\it Op\'erateurs pseudo-Fredholm I: R\'esolvant g\'en\'eralis\'e,} J. Operator Theory, {\bf 24}
(1990), 255--276.

\bibitem{mbekhta-muller} M. Mbekhta, V. M\"{u}ller, {\it On the axiomatic theory of spectrum II}, Studia Math., {\bf 119} (1996), 129--147.


\bibitem{muller} V. M\"{u}ller,  {\it On the Kato-decomposition of quasi-Fredholm and B-Fredholm operators}, Vienna, Preprint ESI, 1013 (2001).

\bibitem{poon} P. W. Poon, {\it On the stability radius of a quasi-Fredholm operator}, Proc. Amer. Math. Soc., {\bf126} (1998) 1071--1080.

 \bibitem{tajmouati1} A. Tajmouati, M. Karmouni,  M. Abkari, {\it Pseudo semi-B-Fredholm and Generalized
Drazin invertible operators Through Localized SVEP}, Italian Journal of pure and applied
mathematics, {\bf 37} (2017), 301--314.

\bibitem{taylor} A. E. Taylor, {\it Theorems on Ascent, Descent, Nullity and Defect of Linear Operators}, Math. Annalen, {\bf 163}   (1966), 18--49.

\bibitem{zariouh-zguitti} H. Zariouh, H. Zguitti, {\it On pseudo B-Weyl operators and generalized drazin invertible for operator matrices}, Linear and Multilinear Algebra, {\bf 64} (2016), 1245--1257.



\end{thebibliography}
\end{document}